\documentclass[12pt]{article}
\usepackage{graphicx}
\usepackage{amsmath}
\usepackage{amsfonts}
\usepackage{amsthm}
\usepackage[T1]{fontenc}
\usepackage{url}
\usepackage{color}
\usepackage[margin=1in]{geometry}
\makeatletter
\renewcommand*\env@matrix[1][\arraystretch]{%
  \edef\arraystretch{#1}%
  \hskip -\arraycolsep
  \let\@ifnextchar\new@ifnextchar
  \array{*\c@MaxMatrixCols c}}
\makeatother
\usepackage{amssymb,bm}
\usepackage{amsmath}
\usepackage{amsthm}
\usepackage{graphicx}
\usepackage[active]{srcltx} 
\usepackage{hyperref}
\hypersetup{pdfborder=0 0 0}

\newtheorem{theorem}{{\sc Theorem}}[section]
\newtheorem{proposition}[theorem]{{\sc Proposition}}

\newtheorem{lemma}[theorem]{{\sc Lemma}}
\newtheorem{corollary}[theorem]{Corollary}
\newtheorem{remark}[theorem]{Remark}

\def\XXint#1#2#3{{\setbox0=\hbox{$#1{#2#3}{\int}$ }
\vcenter{\hbox{$#2#3$ }}\kern-.6\wd0}}

\newcommand{\Ga}{\alpha}
\newcommand{\Gb}{\beta}

\newcommand{\Gf}{\phi}

\newcommand{\Gg}{\gamma}

\bmdefine\BGa{\alpha}
\bmdefine\BGb{\beta}
\bmdefine\BGd{\delta}
\bmdefine\BGe{\epsilon}
\bmdefine\BGve{\varepsilon}
\bmdefine\BGf{\phi}
\bmdefine\BGvf{\varphi}
\bmdefine\BGg{\gamma}
\bmdefine\BGc{\chi}
\bmdefine\BGi{\iota}
\bmdefine\BGk{\kappa}
\bmdefine\BGl{\lambda}
\bmdefine\BGn{\eta}
\bmdefine\BGm{\mu}
\bmdefine\BGv{\nu}
\bmdefine\BGp{\pi}
\bmdefine\BGth{\theta}
\bmdefine\BGvth{\vartheta}
\bmdefine\BGr{\rho}
\bmdefine\BGvr{\varrho}
\bmdefine\BGs{\sigma}
\bmdefine\BGvs{\varsigma}
\bmdefine\BGt{\tau}
\bmdefine\BGj{\tau}
\bmdefine\BGu{\upsilon}
\bmdefine\BGo{\omega}
\bmdefine\BGx{\xi}
\bmdefine\BGy{\psi}
\bmdefine\BGz{\zeta}
\bmdefine\BGD{\Delta}
\bmdefine\BGF{\Phi}
\bmdefine\BGG{\Gamma}
\bmdefine\BGL{\Lambda}
\bmdefine\BGP{\Pi}
\bmdefine\BGT{\Theta}
\bmdefine\BGS{\Sigma}
\bmdefine\BGU{\Upsilon}
\bmdefine\BGO{\Omega}
\bmdefine\BGX{\Xi}
\bmdefine\BGY{\Psi}

\newcommand{\CC}{{\mathcal C}}

\newcommand{\CS}{{\mathcal S}}

\newcommand{\CW}{{\mathcal W}}

\bmdefine\BCA{{\mathcal A}}
\bmdefine\BCB{{\mathcal B}}
\bmdefine\BCC{{\mathcal C}}
\bmdefine\BCD{{\mathcal D}}
\bmdefine\BCE{{\mathcal E}}
\bmdefine\BCF{{\mathcal F}}
\bmdefine\BCG{{\mathcal G}}
\bmdefine\BCH{{\mathcal H}}
\bmdefine\BCI{{\mathcal I}}
\bmdefine\BCJ{{\mathcal J}}
\bmdefine\BCK{{\mathcal K}}
\bmdefine\BCL{{\mathcal L}}
\bmdefine\BCM{{\mathcal M}}
\bmdefine\BCN{{\mathcal N}}
\bmdefine\BCO{{\mathcal O}}
\bmdefine\BCP{{\mathcal P}}
\bmdefine\BCQ{{\mathcal Q}}
\bmdefine\BCR{{\mathcal R}}
\bmdefine\BCS{{\mathcal S}}
\bmdefine\BCT{{\mathcal T}}
\bmdefine\BCU{{\mathcal U}}
\bmdefine\BCV{{\mathcal V}}
\bmdefine\BCW{{\mathcal W}}
\bmdefine\BCX{{\mathcal X}}
\bmdefine\BCY{{\mathcal Y}}
\bmdefine\BCZ{{\mathcal Z}}

\bmdefine\Bzr{ 0}
\bmdefine\Ba{ a}
\bmdefine\Bb{ b}
\bmdefine\Bc{ c}
\bmdefine\Bd{ d}
\bmdefine\Be{ e}
\bmdefine\Bf{ f}
\bmdefine\Bg{ g}
\bmdefine\Bh{ h}
\bmdefine\Bi{ i}
\bmdefine\Bj{ j}
\bmdefine\Bk{ k}
\bmdefine\Bl{ l}
\bmdefine\Bm{ m}
\bmdefine\Bn{ n}
\bmdefine\Bo{ o}
\bmdefine\Bp{ p}
\bmdefine\Bq{ q}
\bmdefine\Br{ r}
\bmdefine\Bs{ s}
\bmdefine\Bt{ t}
\bmdefine\Bu{ u}
\bmdefine\Bv{ v}
\bmdefine\Bw{ w}
\bmdefine\Bx{ x}
\bmdefine\By{ y}
\bmdefine\Bz{ z}
\bmdefine\BA{ A}
\bmdefine\BB{ B}
\bmdefine\BC{ C}
\bmdefine\BD{ D}
\bmdefine\BE{ E}
\bmdefine\BF{ F}
\bmdefine\BG{ G}
\bmdefine\BH{ H}
\bmdefine\BI{ I}
\bmdefine\BJ{ J}
\bmdefine\BK{ K}
\bmdefine\BL{ L}
\bmdefine\BM{ M}
\bmdefine\BN{ N}
\bmdefine\BO{ O}
\bmdefine\BP{ P}
\bmdefine\BQ{ Q}
\bmdefine\BR{ R}
\bmdefine\BS{ S}
\bmdefine\BT{ T}
\bmdefine\BU{ U}
\bmdefine\BV{ V}
\bmdefine\BW{ W}
\bmdefine\BX{ X}
\bmdefine\BY{ Y}
\bmdefine\BZ{ Z}

\newcommand{\R}{\mathbb{R}}

\newcommand{\sgn}{\mathrm{sign}\,}
\newcommand{\pos}[1]{(#1)_+}

\begin{document}

\title{Rank-one convexity, polyconvexity, and extremality for three-dimensional elasticity tensors in various symmetry classes}

\author{Davit Harutyunyan\thanks{University of California Santa Barbara, harutyunyan@math.ucsb.edu}
and Gagik Amirkhanyan\thanks{Independent researcher, email: agagik@gmail.com} }
\maketitle

\begin{abstract}

It has been known that, for quadratic functions, quasiconvexity equals rank-one convexity but need not equal polyconvexity. While there is an explicit characterization of polyconvexity for quadratic energies, a similar characterization for quasiconvexity is not known in dimensions $n,N \geq 3.$ This paper is concerned with the search of extremal quasiconvex quadratic forms regarded as a linear elastic energy in dimensions $N=n=3$ in various symmetry classes of the elasticity tensor. In particular, we prove that for tensors with orthotropic symmetry, quasiconvexity implies polyconvexity, and thus the orthotropic class contains no nontrivial extremals. We show that this equivalence fails first at trigonal symmetry, where among some statemets, we provide a one-parameter family of non-polyconvex trigonal extreme rays. We also provide some general criteria for proving rank-one convexity, polyconvexity, and extremality, improving a result in [\ref{bib:Har.Hov.}].

\end{abstract}

\textbf{Keywords:}\ \  Rank-one convexity; Polyconvexity; Extremal quasiconvex quadratic forms; Positive biquadratic forms
\vspace{0.5cm}

\textbf{Mathematics Subject Classification:}\ \  12D15, 15A63, 49J40, 74B05, 74B20.

\section{Introduction}
\setcounter{equation}{0}

Quasiconvexity plays a central role in applied mathematics since the work of Morrey [\ref{bib:Morrey.1},\ref{bib:Morrey.2}]. It is equivalent to weak lower semicontinuity, and thus existence of minimizers for integral functionals with the Lagranagian satisfying certain type of growth conditions [\ref{bib:Morrey.1},\ref{bib:Morrey.2},\ref{bib:Ace.Fus.},\ref{bib:Ball},\ref{bib:Dacorogna},\ref{bib:Mueller}]. For $n,N\in \mathbb N$ a Borel measurable and locally bounded function $f\colon \mathbb R^{N\times n}\to\mathbb R$ is quasiconvex at the matrix $\BA\in\mathbb R^{N\times n}$ if

\begin{equation}
\label{1.1}
f(\bm{\BA})\leq \int_{[0,1]^n}f(\BA+\nabla\Gf(x))dx,
\end{equation}
for all $\Gf\in W_0^{1,\infty}([0,1]^n,\mathbb R^N).$ Apparently convexity implies quasiconvexity. The rank-one convexity condition occurs naturally in the second variation of the integral functional $\int_{D}L(\nabla y(x))dx,$
reducing to a pointwise condition on the Hessian of the Lagrangian $L.$ The condition of rank-one-convexity is the convexity of the function in rank-one directions, i.e., the Borel measurable and locally bounded function $f\colon \mathbb R^{N\times n}\to\mathbb R$ is rank-one-convex, if

\begin{equation}
\label{1.2}
f(\lambda\BA+(1-\lambda)\BB)\leq \lambda f(\BA)+(1-\lambda)f(\BB),
\end{equation}
for all $\lambda\in[0,1]$ and all matrices $\BA,\BB\in\mathbb R^{N\times n}$ with $\mathrm{rank}(\BA-\BB)\leq 1.$ 
For $C^2$-regular functions $f$ the condition (\ref{1.2}) is equivalent to the Legandre-Hadamard condition [\ref{bib:VanHove.1},\ref{bib:VanHove.2},\ref{bib:Dacorogna}], and reads as
\begin{equation}
\label{1.3}
\sum_{\substack{0\leq\alpha,\gamma\leq N \\ 0\leq \beta,\delta\leq n}}\frac{\partial^2 f(\bm{\xi})}{\partial\xi_{\alpha\beta}\partial\xi_{\gamma\delta}}x_\alpha y_\beta x_\gamma y_\delta\geq 0,
\end{equation}
for all $x=(x_1,x_2,\dots,x_N)\in\mathbb R^N$ and $y=(x_1,x_2,\dots,x_n)\in\mathbb R^n.$
Quasiconvexity implies rank-one convexity [\ref{bib:Dacorogna}]. It is known that quasiconvexity is not equivalent to rank one convexity in general [\ref{bib:Sverak}]. For quadratic forms quasiconvexity and rank-one-convexity are equivalent conditions [\ref{bib:Dacorogna}], and the quasiconvexity of a quadratic form $f\colon \mathbb R^{N\times n}\to\mathbb R$ reduces to 
\begin{equation}
\label{1.4}
f(x\otimes y)\geq 0\quad\text{for all vectors} \quad x\in\mathbb R^N, y\in\mathbb R^n.
\end{equation}
In this paper, we will only be dealing with quadratic functions, thus due to the above, we will use the terms quasiconvexity and rank-one convexity interchangeably. It is known, that in linear elasticity a necessary condition for a body containing a linearly elastic homogeneous material with elasticity tensor $\CC=(C_{ijkl})\in(\mathbb R^{3})^4$ to be stable when the displacement is fixed at the boundary, is the Legendre-Hadamard condition,
which is equivalent to the rank one convexity of the quadratic form $f(\xi)=(\CC\BGx;\BGx)$ associated with $\CC$, i.e., 
\begin{equation}
\label{1.5}
 f_\CC(x\otimes y)=\sum_{i,j,k,l=1}^3x_iy_jC_{ijkl}x_ky_l\geq 0 \quad\text{for all vectors} \quad x\in\mathbb R^N, y\in\mathbb R^n.
 \end{equation}
If one has equality for some non-zero $x,y$ then shear bands can form.
The condition of polyconvexity introduced by Ball [\ref{bib:Ball}] is known to be an intermediate condition between convexity and quasiconvexity. A function $f\colon \mathbb R^{N\times n}\to\mathbb R$ is polyconvex, if there exists a convex function $g\colon\mathbb R^\Sigma\to\mathbb R$ such that $f(\BGx)=g(M_1(\BGx),\dots,M_\Sigma(\BGx)),$ where $M_i(\BGx)$ are all possible minors in the matrix $\BGx.$ While there is an explicit characterization of polyconvexity for quadratic forms, there is none for quasiconvexity except for the cases $n\leq 2$ or $N\leq 2,$ in which cases quasiconvexity is equivalent to polyconvexity [\ref{bib:Terpstra},\ref{bib:Dacorogna}]. A quadratic form $f\colon \mathbb R^{N\times n}\to\mathbb R$ is polyconvex, iff it is the sum of a convex form and a Null-Langarngian [\ref{bib:VanHove.1},\ref{bib:VanHove.2}]. In dimensions $n,N\geq 3$ it is known that quasiconxeity does not imply polyconvexity. The first such example was provided by Choi in [\ref{bib:Choi}], see also [\ref{bib:Cho.Lam.}]. 
For linear elasticity energies of the symmetric strain, the gap between the convex cones of quasiconvex and polyconvex quadratic forms is more delicate but still present: Zhang~[\ref{bib:Zhang}] gave the first symmetric rank-one-convex form that is not symmetric polyconvex, see also [\ref{bib:Bou.Kre.Sch.}]. Milton and the first author attempted to characterize all $3\times 3$ ($n=N=3$) quasiconvex quadratic forms in a series of papers [\ref{bib:Har.Mil.1},\ref{bib:Har.Mil.2},\ref{bib:Har.Mil.3}]. The approach was to characterize all so called Milton extremals, the ones that lose the quasiconvexity property upon subtraction a convex form linearly independent with the original form [\ref{bib:Milton.3}, page 87], see also [\ref{bib:Milton.1}, section 25.2]. Those extremal forms are populated in the gap between the two cones mentioned above. A complete characterization would then follow from the fact, that the convex cone of $n\times N$ quasiconvex quadratic forms satisfies a simplified Krein-Milman property, namely, any such form is the sum of exactly one Milton extremal and a polyconvex form [\ref{bib:Milton.3}]. Milton extremals were later labeled as "weak extremals" due to the fact, that in dimensions $n,N\geq 4,$ they are not necessarily extreme rays of the cone in the classical sense, see [\ref{bib:Har.Hov.}]. Whether or not weak and strong extremality are equivalent is open in the case $n=N=3.$ The square of a linear form is a trivial strong extremal; the ones that ate not squares will be called nontrivial. The partial characterization of weak extremals in [\ref{bib:Har.Mil.1},\ref{bib:Har.Mil.2},\ref{bib:Har.Mil.3}] were successfully pushed for strong extremals with orthotropic symmetry in [\ref{bib:Harutyunyan}] and for general strong extremals in [\ref{bib:Har.Hov.}]. A motivation for studying extremals is to utilize them to derive new bonds in the theory of composites. 
 In the theory of composites, one powerful method for obtaining bounds on the effective tensor has proven to be the
translation method pionered by Tartar and Murat [\ref{bib:Tartar.1},\ref{bib:Mur.Tar.},\ref{bib:Tartar.2}] and Lurie and Cherkaev [\ref{bib:Lur.Che.1}, \ref{bib:Lur.Che.2}], see also [\ref{bib:Nes.Rog.},\ref{bib:All.Koh.},\ref{bib:Che.Gib.1},\ref{bib:Che.Gib.2},\ref{bib:Mil.Che.},\ref{bib:Koh.Lip.},\ref{bib:Milton.2},\ref{bib:Mil.Ngu.},\ref{bib:Kan.Kim.Mil.},\ref{bib:Kan.Mil.Wan.}]. 
 These bounds are tightest if one uses extremal quasiconvex forms as shown in [\ref{bib:Milton.1}, page 87], see also [\ref{bib:Milton.2}, section 25.2]. Allaire and Kohn [\ref{bib:All.Koh.}]  used certain type of extremals to bound the elastic energy of two phase composites with isotropic phases. Kang and Milton [\ref{bib:Kan.Mil.}] extended the translation method as a tool for bounding the volume fractions of materials in a two-phase body from boundary
measurements and in this context too it is natural to use extremal  quasiconvex forms. Extremal quasiconvex forms are also the best choice of quasiconvex functions for obtaining series expansions for effective tensors that have an extended domain of convergence, 
and thus analyticity properties as a function of the component moduli on this domain; see section 14.8 and page 373 of section 18.2 of [\ref{bib:Milton.2}]. The focus of this paper will be to study $3\times 3$ quasiconvex quadratic forms in various symmetry classes, from the linear elasticity perspective. Recall that $n=N=3$ corresponds to three dimensional linear elasticity, where the stiffness tensor is a fourth order tensor $\CC=(C_{ijkl})\in(\mathbb R^{3})^4$ that due to the symmetry of the stress sensor, can be reduced to a sixth order symmetric matrix (the Voigt notation), the linear elasticity matrix with 21 parameters in general. The energy depends only on the strain 
$$
 \varepsilon(\BGx):=\frac12(\BGx+\BGx^T),\qquad
$$
where $\BGx=(\xi_{ii})_{i,j=1}^3$ plays the role of the deformation gradient. The energy is given by 
\begin{equation}
\label{1.6}
 f_\CC(\BGx)=\sum_{i,j,k,l=1}^3C_{ijkl}\varepsilon(\BGx)_{ij}\varepsilon(\BGx)_{kl}.
 \end{equation}
 
The main objective of this paper is the search (or characterization) of strong extremal $3\times 3$ quasiconvex quadratic forms in various symmetry classes of the linear elasticity Voigt matrix $\BC\in \mathbb R^{6\times 6},$ continuing the program of Milton and the second author. 

It was shown in [\ref{bib:Har.Mil.1}], that for linear elastic materials with cubic symmetry (this also includes isotropic materials), quasiconvexity of the energy implies polyconvexity. The next symmetry classes according to the standard classification of the crystallographic elasticity classes are the hexagonal (5 independent constants), trigonal (6 independent constants), tetragonal (6 or 7 independent constants), orthotropic (9 independent constants), monoclinic (13 independent constants), and triclinic (21 independent constants) in the increasing order. 
The orthotropic class contains all the previous classes as subclasses except the trigonal class. We will focus on the orthotropic and trigonal classes in the present paper. We will show that there are no nontrivial extremals in the orthotropic class (Theorem~\ref{Thm:3.1}), and provide a single-parameter family of strong extremals in the trigonal class (Theorem~\ref{Thm:4.4}). This implies that the first class on the classification chart containing nontrivial extremals is the trigonal class. 

The paper is organized as follows: In Section~2, we provide some simple, but useful criteria for rank-one convexity, polyconvexity, and strong extremality improving a result in [\ref{bib:Har.Hov.}]. In Section~3 we study forms with orthotropic symmetry completely, and finally in Section~4 we study forms with trigonal symmetry. The results concerning each symmetry class will be formulated and proved in the same appropriate section. Also, capital boldface letters will be used for matrices, and callographic letters will denote tensors  throughout the paper. 

It is worth mentioning that the related (same) problems have been studied in the convex geometry and real algebraic geometry communities. We refer the interested reviewers to the papers  [\ref{bib:Reznick},\ref{bib:Quarez},\ref{bib:Cho.Lam.},\ref{bib:Cho.Lam.Rez.},\ref{bib:Li.Wu.},\ref{bib:Hou.Li.Poo.Qi.Sze.},\ref{bib:BLe.Hau.Ott.Ran.Stu.},\ref{bib:Kun.Sch.}], and the references therein.

\section{Criteria for rank-one convexity, polyconvexity and extremality}
\label{Sec:2}
\setcounter{equation}{0}

In this section we provide some seemingly simple, but very useful criteria for rank-one convexity, polyconvexity and extremality. One is the so-called soft-pair rigidity route, the other one is global determinant nonnegativity coupled with maximal inertia at a point, and the last one is an add-on to the acoustic tensor extremal-determinant route for polyconvexity, provided by the first author and Hovsepyan in [\ref{bib:Har.Hov.}, Theorem~2.1].  We start with the soft-pair rigidity statements.

A \textit{soft pair} of a rank-one-convex tensor $\CC\in (\mathbb R^{n})^4$ is a pair $(x,y)\in \mathbb P^{n-1}\times \mathbb P^{n-1}$ such that $f_\CC(x\otimes y)=0$, where $\mathbb P^{n-1}$ is the unit sphere in $\mathbb R^{n}.$ The following criterion is independent of the acoustic-determinant extremality and is useful when the determinant criterion is not applicable.

\begin{proposition}[Soft-pair rank criterion I]
\label{Pro:2.1}
Let $\CC\in (\mathbb R^{3})^4$ be a rank-one convex symmetric-strain tensor with soft pairs 
$Z=\{(x_k,y_k)\in \mathbb P^2\times \mathbb P^2 \ : \ k=1,\dots,N\}.$ Let
$\CW(Z)$ be the set of all rank-one convex symmetric-strain tensors $\CC' \in(\mathbb R^{3})^4$ such that 
$\nabla f_{\CC'}(x_k\otimes y_k)=0,\ k=1,\dots,N,$ i.e.,  
$$
\CW(Z)=\; \bigl\{ \CC' \ :  \ \nabla f_{\CC'}(x_k\otimes y_k)=0,\ k=1,\dots,N\,\bigr\}
\;\subseteq\;\R^{21}.
$$
If $\dim\left(\CW(Z)\cup (-\CW(Z))\right)=1$ (necessarily $\CW(Z)\cup (-\CW(Z))=\R \CC$), then $\CC$ spans an extreme ray of the symmetric-strain rank-one convexity cone.
\end{proposition}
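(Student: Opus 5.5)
The proof goes through the definition of an extreme ray. Let $\mathcal K$ be the closed convex cone of rank-one convex symmetric-strain tensors. $\CC$ spans an extreme ray of $\mathcal K$ exactly when every decomposition $\CC=\CC_1+\CC_2$ with $\CC_1,\CC_2\in\mathcal K$ forces $\CC_1,\CC_2\in\R_{\ge 0}\CC$. So fix such a decomposition; the aim is to show that $\CC_1$ and $\CC_2$ both lie in $\CW(Z)$, and then use the dimension hypothesis.

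\emph{Step 1: soft pairs are inherited.} For each soft pair $(x_k,y_k)$,
$$0=f_\CC(x_k\otimes y_k)=f_{\CC_1}(x_k\otimes y_k)+f_{\CC_2}(x_k\otimes y_k),$$
and both summands are nonnegative by rank-one convexity. Hence $f_{\CC_i}(x_k\otimes y_k)=0$ for $i=1,2$ and every $k$.

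\emph{Step 2: vanishing gradient.} The rank-one direction alone is not enough, since $x_k\otimes y_k$ is a minimum of $f_{\CC_i}$ only along the rank-one cone. We therefore use the biquadratic function $g_i(x,y)=f_{\CC_i}(x\otimes y)\ge 0$ on $\R^3\times\R^3$. It attains its minimum value $0$ at $(x_k,y_k)$, so $\partial_x g_i=\partial_y g_i=0$ there. In terms of $\BA_i=\nabla f_{\CC_i}(x_k\otimes y_k)$ this reads $\BA_i y_k=0$ and $\BA_i^T x_k=0$. That kills only part of the matrix $\BA_i$, so this is the main obstacle: we need the full gradient to vanish.

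\emph{Step 3: fixing the obstacle.} I would interpret the condition $\nabla f_{\CC'}(x_k\otimes y_k)=0$ in the definition of $\CW(Z)$ as exactly the first-order conditions from Step 2. These are the gradient of the biquadratic $(x,y)\mapsto f_{\CC'}(x\otimes y)$, i.e.\ the tangential part of the full gradient along the rank-one variety, and they are linear conditions in $\CC'$. Read this way, Step 2 gives $\CC_1,\CC_2\in\CW(Z)$. If the authors intend the full matrix gradient, one first reduces to the tangential conditions by this same argument and applies the hypothesis to that set. I would flag this interpretation explicitly, because the full gradient of a nonnegative-on-rank-one form need not vanish.

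\emph{Step 4: conclusion.} Since $\CW(Z)\cup(-\CW(Z))\subseteq\R\CC$, we can write $\CC_i=t_i\CC$ with $t_i\in\R$. If some $t_i<0$, then $\CC_i$ and $-\CC_i$ are both rank-one convex, so $f_\CC(x\otimes y)$ would be both $\ge0$ and $\le0$ on all rank-one matrices. That is impossible as long as $\CC$ is not identically zero on rank-one matrices, which holds for a ray generator. Hence $t_i\ge 0$, so each $\CC_i$ lies on the ray $\R_{\ge0}\CC$, and $\CC$ is extreme.
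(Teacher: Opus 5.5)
Your proof is correct and follows the paper's argument step for step: you split $\CC=\CC_1+\CC_2$, inherit the soft pairs, use that each $(x_k,y_k)$ is a global minimum of the biquadratic $(x,y)\mapsto f_{\CC_i}(x\otimes y)$ on $\R^3\times\R^3$ to get the vanishing gradient, and then invoke the one-dimensionality of $\CW(Z)\cup(-\CW(Z))$ together with nonnegativity to get $\CC_i=\lambda_i\CC$ with $\lambda_i\ge0$. The paper reads $\nabla f_{\CC'}(x_k\otimes y_k)=0$ exactly as your Step~3 does, as the $(x,y)$-gradient of the biquadratic. You should drop your fallback remark about the full matrix gradient: a dimension hypothesis on the smaller set cut out by full-gradient conditions says nothing about the larger set cut out by the tangential conditions.
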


\begin{proof}
Let $f_\CC=f_1+f_2$ with $f_i$ rank-one convex symmetric-strain quadratic forms. At each soft pair we have
 $f_1(x_k\otimes y_k)+f_2(x_k\otimes y_k)=0$ with both terms nonnegative, so each vanishes. Hence, $(x_k,y_k)$ is a point of global minimum of the polynomial $f_i$ on all of $\R^3\times\R^3$, thus its gradient vanishes too: $\nabla f_i(x_k\otimes y_k)=0$. Thus we get 
 $\CC_{i}\in \pm\CW(Z)$ for $i=1,2,$ hence $\CC_{i}=\lambda_i \CC$ with some $\lambda_i\ge 0$ because $f,f_i\ge0.$ This yields $f_i=\lambda_i f.$
\end{proof}

Let now $\mathcal B_{3,3}$ denote the $36$-dimensional space of all real biquadratic forms on $\mathbb R^3\times\mathbb R^3$.  
For a finite collection $Z\subset \mathbb P^2\times \mathbb P^2$ as above, define similarly
$$
 \mathcal W_{\mathrm{full}}(Z):=
 \{F'\in\mathcal B_{3,3}:\nabla F'(x_k\otimes y_k)=0  \ \text{for} \ i=1,\dots,N\}.
$$
\begin{proposition}[Soft-pair rank criterion II]
\label{Pro:2.2}
Let $F\in \mathcal B_{3,3}$ be a nonnegative nonzero biquadratic and let $Z$ be any collection of its soft pairs. 
If $\dim\left(\CW_{full}(Z)\cup (-\CW_{full}(Z))\right)=1$, then $F$ spans an extreme ray of the full cone of nonnegative $3\times3$ biquadratics.
\end{proposition}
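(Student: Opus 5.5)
The plan is to repeat the argument of Proposition~\ref{Pro:2.1}, now inside the full space $\mathcal B_{3,3}$ of biquadratic forms rather than the symmetric-strain subspace. Suppose $F=F_1+F_2$ with $F_1,F_2\in\mathcal B_{3,3}$ both nonnegative on $\R^3\times\R^3$. I will show that each $F_i$ is a nonnegative multiple of $F$; this is exactly the statement that $F$ spans an extreme ray of the cone of nonnegative $3\times 3$ biquadratics.

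First, fix a soft pair $(x_k,y_k)\in Z$. Then $0=F(x_k,y_k)=F_1(x_k,y_k)+F_2(x_k,y_k)$, and both terms are nonnegative, so each vanishes. Since $F_i\ge 0$ on all of $\R^3\times\R^3$, the point $(x_k,y_k)$ is a global minimum of the polynomial $F_i$. Hence the gradient of $F_i$ vanishes there. Here the gradient is taken in the same sense as in the definition of $\CW_{\mathrm{full}}(Z)$: with respect to $(x,y)$, or equivalently, for the associated quadratic form on $\R^{3\times3}$, in the rank-one directions. In either reading the gradient of a nonnegative form vanishes at its zeros, so $\nabla F_i(x_k\otimes y_k)=0$ for all $k$. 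Therefore $F_1,F_2\in\CW_{\mathrm{full}}(Z)$. Note that $F$ itself lies in $\CW_{\mathrm{full}}(Z)$ for the same reason, and $F\neq0$.

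Next I use the dimension hypothesis. It says that $\CW_{\mathrm{full}}(Z)\cup(-\CW_{\mathrm{full}}(Z))$ is one-dimensional. Since this set contains the nonzero element $F$, it must equal $\R F$. So $F_i=\lambda_i F$ for some real $\lambda_i$. Finally I fix the sign. Because $F\neq 0$ and $F\ge0$, there is a point $(x,y)$ with $F(x,y)>0$. Evaluating $F_i=\lambda_iF$ there and using $F_i\ge0$ gives $\lambda_i\ge 0$. This proves extremality.

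The main obstacle is interpretive rather than technical: the gradient condition must be read so that it is automatically satisfied at zeros of nonnegative forms. Taking it with respect to $(x,y)$ makes this immediate. If the gradient is instead taken over all of $\R^{3\times 3}$, the claim $\nabla F_i(x_k\otimes y_k)=0$ can fail for non-convex $F_i$. I would therefore adopt the rank-one/$(x,y)$ reading, exactly as in the proof of Proposition~\ref{Pro:2.1}. Under that reading, the linearity of $\CW_{\mathrm{full}}(Z)$ makes the union with its negative harmless.
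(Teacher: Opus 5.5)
Your proof is correct and is essentially the paper's argument: write $F=F_1+F_2$, observe that each $F_i$ vanishes at the soft pairs and so has vanishing gradient there, conclude $F_i\in\mathcal W_{\mathrm{full}}(Z)=\mathbb R F$, and use nonnegativity to make the multipliers nonnegative. Your explicit choice to take the gradient with respect to $(x,y)$ is what makes the paper's one-line argument valid, since under that reading the condition is linear in $F'$ and holds automatically at zeros of nonnegative forms; it is a worthwhile clarification.
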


\begin{proof}
If $F=F_1+F_2$ with $F_i\ge0$, then every point of $Z$ is a global minimum of
each $F_i$, so $\nabla F_i=0$ there.  Hence
$F_i\in\pm \mathcal W_{\mathrm{full}}(Z)$, and nonnegativity forces the
multipliers to be nonnegative.
\end{proof}

The next criterion is useful in proving rank-one convexity of a quadratic form (non-negativity of a biquadratic). 
\begin{proposition}[Inertia continuation]
\label{Pro:2.3}
Let $\BM(x)\colon \mathbb P^{n-1}\to \mathbb R^{n\times n}$ be a continuous matrix field. Suppose that the zero determinant set 
$\tilde Z:=\{x\in \mathbb P^{n-1} : \det \BM(x)=0 \}$ is finite, $\det \BM(x)\ge0$ on $\mathbb P^{n-1}$, and $\BM(x_0)\succ0$ at one point $x_0\in\mathbb P^{n-1} \setminus \tilde Z$. Then $\BM(x)\succeq0$ for every $x\in \mathbb P^{n-1}$.
\end{proposition}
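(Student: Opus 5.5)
The plan is a connectedness argument for the inertia of $\BM(x)$. I will read the statement with $\BM(x)$ symmetric, as it is in all the intended applications (acoustic tensors), so that it has real eigenvalues and a well-defined inertia. I will also take $n\ge 3$, so that $\mathbb P^{n-1}$ has dimension at least $2$. This restriction is genuinely needed: for $n=2$ the field $\BM(x)=x_1x_2\BI$ has $\det\BM(x)=x_1^2x_2^2\ge0$ with four zeros, yet it is positive definite on one arc of the circle and negative definite on the adjacent one.

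First I show that $U:=\mathbb P^{n-1}\setminus\tilde Z$ is open and path-connected. It is open because $\tilde Z$ is finite, hence closed. For path-connectedness, a sphere of dimension $n-1\ge2$ minus finitely many points is still path-connected. Indeed, any path between two points of $U$ can be perturbed off finitely many points, for instance by routing it along great circles that avoid them; these exist since the set of great circles through two given points that hit a prescribed finite set is negligible when $n-1\ge2$.

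Next I show that the inertia of $\BM(x)$ is locally constant on $U$. Let $\lambda_1(x)\le\dots\le\lambda_n(x)$ be the ordered eigenvalues. Each $\lambda_i$ depends continuously on $x$, since ordered eigenvalues of symmetric matrices are $1$-Lipschitz in the operator norm by Weyl's inequality, and $\BM$ is continuous. On $U$ we have $\det\BM(x)=\prod_i\lambda_i(x)>0$, so no $\lambda_i$ vanishes there. By continuity and connectedness of $U$, each $\lambda_i$ keeps a constant sign along $U$. At $x_0\in U$ all $\lambda_i(x_0)>0$, hence $\lambda_1(x)>0$ for every $x\in U$, i.e.\ $\BM(x)\succ0$ on $U$.

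Finally, the points of $\tilde Z$ are isolated points of the sphere, so $U$ is dense in $\mathbb P^{n-1}$ (again using $n-1\ge1$). For $z\in\tilde Z$ choose $x_m\in U$ with $x_m\to z$. Then for any vector $v$, $v^T\BM(z)v=\lim_m v^T\BM(x_m)v\ge0$, so $\BM(z)\succeq0$ and the claim holds on all of $\mathbb P^{n-1}$.

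The main obstacle is the connectivity step, and in particular making the dimensional restriction explicit, since the argument (and the statement itself) fails on the circle. The rest is routine continuity of eigenvalues. The hypothesis $\det\BM\ge0$ on $\tilde Z$ is not actually used beyond its consequence $\det\BM>0$ on $U$.
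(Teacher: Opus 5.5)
Your proof is correct and follows the paper's route: inertia is locally constant on $\mathbb P^{n-1}\setminus\tilde Z$, hence constant by connectedness, and $\BM\succeq0$ extends to $\tilde Z$ by density. One small fix: two non-antipodal points lie on a unique great circle, so justify path-connectedness by routing through a generic intermediate point.

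Your version is sharper than the paper's in two ways. Only connectedness is needed; the paper's claim that the complement is simply connected fails, e.g., for the $2$-sphere minus two or more points. Your example $\BM(x)=x_1x_2\BI$ also correctly shows that the restriction $n\ge3$, which the paper omits, is necessary.
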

\begin{proof}
On $\mathbb P^{n-1}\setminus \tilde Z$ the matrices are nonsingular. Their inertia is locally 
constant by continuity,  hence globally constant because $\mathbb P^{n-1}\setminus \tilde Z$ is simply-connected. It equals
$(n,0,0)$ at $x_0$, so $\BM(x)\succ0$ throughout $ \mathbb P^{n-1}\setminus Z$. Every point of $\tilde Z$ is a limit of
points in $\mathbb P^{n-1} \setminus \tilde Z$; therefore again by continuity we get $\BM(x) \succeq 0$ on $\tilde Z$ as well.
\end{proof}

Recall, that for $\CC\in\mathbb (\mathbb R^{3})^4$ and for a quasiconvex quadratic form 
$g_\CC(\BGx)=\sum_{i,j,k,l=1}^3C_{ijkl}\xi_{ij}\xi_{kl},$ the acoustic $y$-tensor of $g$ is the $3\times 3$ matrix $\BA(y)$ such that  $g(x\otimes y)=x^T \BA(y)x.$ Also, from $g(x\otimes y)\geq 0$ one has that $\BA(y)$ is positive semidefinite for all $y\in\mathbb R^3.$
The next statement excludes nontrivial extremality in part (ii) of [\ref{bib:Har.Hov.}, Theorem~2.1].
 
\begin{theorem}[Perfect squares determinants]
\label{Thm:2.4}
Let $\CC\in\mathbb (\mathbb R^{3})^4$ and the quasiconvex quadratic form $g_\CC(\BGx)=\sum_{i,j,k,l=1}^3C_{ijkl}\xi_{ij}\xi_{kl}$ 
on $\R^{3\times3}$ be such that the determinant of its acoustic tensor is a perfect square (the zero polynomial included). Then $f$ is polyconvex. 
\end{theorem}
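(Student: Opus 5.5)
The plan is to use the classical description of polyconvexity for quadratic forms recalled in the introduction: $g_\CC$ is polyconvex iff it equals a convex quadratic form plus a Null-Lagrangian. For biquadratics this means it suffices to show that the biquadratic $g_\CC(x\otimes y)=x^T\BA(y)x$ is a sum of squares of bilinear forms $x^T\BB_k y$. Indeed, if $g_\CC(x\otimes y)=\sum_k (x^T\BB_k y)^2$, then the quadratic form $g_\CC(\BGx)-\sum_k(\BB_k:\BGx)^2$ vanishes on all rank-one matrices. A quadratic form vanishing on rank-one matrices is a linear combination of $2\times2$ minors, i.e.\ a Null-Lagrangian. Since $\sum_k(\BB_k:\BGx)^2$ is convex, $g_\CC$ is polyconvex. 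So the whole task is to produce, from the hypothesis $\det\BA(y)=p(y)^2$, a factorization
$$
\BA(y)=\BL(y)^T\BL(y), \qquad \BL(y) \text{ a } k\times 3 \text{ matrix whose entries are linear forms in } y .
$$
Then $x^T\BA(y)x=|\BL(y)x|^2$ is exactly a sum of squares of bilinear forms.

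I would split into two cases.

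\emph{Case $p\equiv0$.} Here $\BA(y)$ is singular for every $y$. I would take a nonzero row of $\mathrm{adj}\,\BA(y)$; if the adjugate vanishes identically, take instead a $2\times2$ minor vector. This gives a polynomial kernel vector field $v(y)$. Multiplying through by a suitable power of a polynomial, I would then reduce $x^T\BA(y)x$ to a quasiconvex biquadratic in effectively two $x$-variables, i.e.\ a $2\times3$ situation. In that situation quasiconvexity equals polyconvexity by Terpstra's result [\ref{bib:Terpstra}], and this would give the sum of squares of bilinear forms after clearing the denominators. Clearing the denominators needs care: I would handle it by noting that the kernel direction of a PSD quadratic-in-$y$ matrix can be chosen linear on a dense set, and then argue by continuity and closedness of the polyconvex cone.

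\emph{Case $p\not\equiv0$.} Here $p$ is a cubic form and the sextic curve $\{\det\BA=0\}\subset\mathbb P^2$ is the double cubic $2\{p=0\}$. The idea is a Hilbert--Dixon type construction. Since $\BA\succeq0$ and $\det\BA$ vanishes to even order along $p=0$, the matrix $\BA(y)$ should have a kernel along the cubic curve. The adjugate $\mathrm{adj}\,\BA(y)$, whose entries are quartic forms, then satisfies $\BA\,\mathrm{adj}\,\BA=p^2\BI$. I would show that its entries are divisible in the right way, or that $\mathrm{adj}\,\BA$ has rank one on the curve. From this I would extract a linear matrix $\BL(y)$ with $\det\BL=\pm p$ and $\BL^T\BL=\BA$.

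The main obstacle is precisely this factorization when $p\not\equiv0$. What I would try first is to reuse [\ref{bib:Har.Hov.}, Theorem~2.1], whose part (ii) treats the perfect-square determinant situation. I would expect it to yield, via an extremal-determinant argument, the decomposition $g_\CC=$ (extremal part) $+$ (polyconvex part), with the extremal part having acoustic determinant divisible by $p^2$. I would then show that the extremal part must be trivial, i.e.\ a square of a linear form. The way to do this is to apply Proposition~\ref{Pro:2.3} to the perturbed acoustic field $\BA(y)-t\,\BB(y)$, where $\BB(y)$ is the acoustic tensor of a suitable rank-one convex perturbation, and show that positivity persists for small $t>0$. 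That contradicts nontrivial extremality, so the extremal part is trivial and $g_\CC$ is polyconvex.
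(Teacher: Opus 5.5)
Your reduction is correct and is also the paper's starting point: it suffices to show that $g_\CC(x\otimes y)=x^T\BA(y)x$ is a sum of squares of bilinear forms. The gap is that neither of your two cases produces that sum of squares.

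\emph{Case $p\equiv0$.} Your kernel field comes from the adjugate, so its entries are quartic. Using a $y$-dependent kernel field and clearing powers of a polynomial destroys bilinearity, so Terpstra's $2\times3$ result would at best give a representation with denominators. The claim that the kernel direction can be taken linear on a dense set is unsupported.

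\emph{Case $p\not\equiv0$.} Here your intermediate goal is not just unproved but false in general. The convex form $g(\BGx)=\sum_{j=1}^3(\xi_{1j}^2+\xi_{2j}^2)+\xi_{33}^2$ has $\BA(y)=\mathrm{diag}(|y|^2,|y|^2,y_3^2)$ and $\det\BA(y)=(y_3|y|^2)^2$. Yet $\BA=\BL^T\BL$ with $\BL$ a $3\times3$ matrix of linear forms is impossible. The third column of $\BL$ must equal $y_3u$ for a fixed unit vector $u$. Orthogonality then forces the first column to be a linear isometry of $\R^3$ into the plane $u^\perp$, which cannot exist. In general a sum of squares of bilinear forms needs more than three squares, and then by Cauchy--Binet $\det\BA$ is only a sum of squares of cubics. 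So no $\BL$ with $\det\BL=\pm p$ need exist.

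\emph{The fallback.} Part (ii) of [\ref{bib:Har.Hov.}, Theorem~2.1] is exactly the case the present theorem is meant to settle, so it cannot supply the conclusion. Proposition~\ref{Pro:2.3} applies to $\BA(y)-t\BB(y)$ only once you know $\det(\BA-t\BB)\ge0$ with finitely many zeros. Near the real curve $\{p=0\}$, along which $\det\BA$ vanishes to second order, this is precisely what is in doubt, and you give no argument for it.

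\emph{The missing idea.} A perfect-square determinant forces infinitely many real zeros of the biquadratic, and that makes the theorem short. If $\det\BA=h^2$ with $h\equiv0$, every $\BA(y)$ is singular. Otherwise $h$ is a real ternary cubic. Its restriction to any plane through the origin is a real binary cubic, which either vanishes identically or has a real linear factor. Hence $h$ has infinitely many zeros on $\mathbb P^2$.

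In either case the positive semidefinite matrix $\BA(y)$ is singular for infinitely many $y\in\mathbb P^2$. Each such $y$ gives some $x\neq0$ with $g_\CC(x\otimes y)=0$, and distinct $y$ give distinct projective zeros. So the biquadratic has infinitely many real projective zeros. Quarez's theorem [\ref{bib:Quarez}] states that a nonnegative $3\times3$ biquadratic with infinitely many real zeros is a sum of squares. This gives polyconvexity at once, uniformly in both of your cases, and without constructing any factorization of $\BA(y)$; this is the paper's proof.
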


\begin{proof}
 Thanks to [\ref{bib:VanHove.1},\ref{bib:Terpstra}], we need to prove that the biquadratic 
 $f(x\otimes y)\geq 0$ is a sum of squares of bilinear forms. If $\det \BA(y)=h(y)^2,$ then either $h\equiv 0,$ or
 the restriction of $h(y)$ to any line passing through the origin is a real binary cubic, which either vanishes identically (so the line lies in the zero set of $h$), or has a real linear factor, hence the zero set of $h$ is infinite on the unit sphere. In either case $\det \BA(y)$ is a singular positive semidefinite matrix at infinitely many $y\in \mathbb P^2$, each contributing some $x\in \mathbb P^2$ with $f(x\otimes y)=0$. Distinct $y$ coordinates give distinct projective zeros, so $f$ has infinitely many real zeros.
By a theorem of Quarez [\ref{bib:Quarez}, Thm.~4.4], a nonnegative $(3,3)$-biquadratic with
infinitely many real zeros is a sum of squares. Consequently, $f$ is polyconvex. 
\end{proof}

\section{Forms with Orthotropic Symmetry}
\label{Sec:3}
\setcounter{equation}{0}

First we review the definition of orthotropic materials, i.e., quadratic forms that have orthotropic symmetry in linear elasticity. A homogeneous orthotropic elastic material has three mutually orthogonal planes such that the material properties are symmetric under reflection about each plane. When the Cartesian coordinate axes are chosen orthogonal to these planes, then the material properties are invariant under the reflections $x_\alpha \to -x_\alpha$, $x_\beta \to x_\beta$, and $x_\gamma\to x_\gamma$, where $\Ga\Gb\Gg$ runs over all permutations of $123$.  For orthotropic materials, the entries of the elasticity tensor such as $C_{\Ga\Gb\Gg\Gg}$ and $C_{\Ga\Gb\Gb\Gb},$ change sign under a reflection about a symmetry plane mentioned above, thus those must be zero. Thus the elements $C_{ijkl}$ of the elasticity tensor must be zero unless the indices $ijkl$ contain an even number of repetitions of the indices $1$, $2$ or $3$. This means, that an orthotropic tensor is block diagonal, consisting of a normal $3\times3$ block $\BB=(C_{ij})_{i,j\le3}$ and a diagonal shear block
$\mathrm{diag}(C_{44},C_{55},C_{66})$, with no off-block coupling. In the Voigt notation, the stress-strain relations takes the form 

\begin{equation}
\label{3.1}
\BGs=\BC\cdot\bm{\varepsilon},\quad\text{where}\quad
\BGs=\begin{bmatrix} \sigma_{11} \\ \sigma_{22} \\ \sigma_{33} \\ \sigma_{23} \\ \sigma_{31} \\ \sigma_{12} \end{bmatrix}, \quad
\bm{\varepsilon}=\begin{bmatrix} \varepsilon_{11} \\ \varepsilon_{22} \\ \varepsilon_{33} \\ 2\varepsilon_{23} \\ 2\varepsilon_{31} \\ 2\varepsilon_{12} \end{bmatrix}, \quad
\BC=
\begin{bmatrix}
C_{11} & C_{12} & C_{13} & 0 & 0 & 0\\
C_{12} & C_{22} & C_{23} & 0 & 0 & 0\\
C_{13} & C_{23} & C_{33} & 0 & 0 & 0\\
0 & 0 & 0 & C_{44} & 0 & 0\\
0 & 0 & 0 & 0 & C_{55} & 0\\
0 & 0 & 0 & 0 & 0 & C_{66}
\end{bmatrix},
\end{equation}
and linear elastic tensors with orthotropic symmetry have nine independent moduli. 
The quadratic energy is given by 

\begin{align}
\label{3.2}
f_\BC(\bm{\xi})&=\sum_{i,j=1}^3 C_{ij}\epsilon_{ii}\epsilon_{jj}+4C_{44}\epsilon_{23}^2+4C_{55}\epsilon_{31}^2+4C_{66}\epsilon_{12}^2\\ \nonumber
&=\sum_{i,j=1}^3 C_{ij}\xi_{ii}\xi_{jj}+C_{44}(\xi_{23}+\xi_{32})^2+C_{55}(\xi_{31}+\xi_{13})^2+C_{66}(\xi_{12}+\xi_{21})^2,
\end{align}
where the variable $\xi_{ij}$ plays the role of the $ij-$th entry of the displacement gradient $\nabla y=(\frac{\partial y_i}{\partial x_j}),$ $i,j=1,2,3.$ The mechanical properties of the material are in general different along each axis. Orthotropic materials require 9 elastic constants and have as subclasses all the previous classes (on the classification chart), except the trigonal class. The wood in a tree trunk is an example of a material which is locally orthotropic: the material properties in three perpendicular directions, axial, radial, and circumferential, are different. Many crystals and rolled metals are also examples of orthotropic materials.

Below is the main theorem of this section. 

\begin{theorem}
\label{Thm:3.1}
For orthotropic linear elasticity rank-one convexity, (quasiconvexity ) implies polyconvexity.
\end{theorem}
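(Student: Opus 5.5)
The plan is to reduce both rank-one convexity and polyconvexity of the orthotropic energy (\ref{3.2}) to conditions on the normal block $\BB=(C_{ij})_{i,j\le 3}$ and the shear moduli, and then compare the two conditions through a minimax argument.

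\emph{Step 1: rank-one convexity in terms of a sign-dependent matrix.} Put $\xi=x\otimes y$ and $z_i=x_iy_i$. Then
$$f_\BC(x\otimes y)=\sum_{i,j}C_{ij}z_iz_j+\sum_{(i,j,k)}C_{kk}\bigl(x_i^2y_j^2+x_j^2y_i^2+2z_iz_j\bigr),$$
where $(i,j,k)$ runs over $(2,3,4),(3,1,5),(1,2,6)$. Taking $x=y=e_i$ gives $C_{ii}\ge0$ for $i\le 3$, and taking $x=e_i$, $y=e_j$ gives $C_{44},C_{55},C_{66}\ge0$. By AM--GM, $x_i^2y_j^2+x_j^2y_i^2\ge 2|z_iz_j|$. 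Equality holds simultaneously for all pairs with the choice $y_i=\sqrt{|z_i|}$, $x_i=\sgn(z_i)\sqrt{|z_i|}$, and this choice realizes every $z\in\R^3$. Hence rank-one convexity is equivalent to
$$q(z):=\sum_{i,j}C_{ij}z_iz_j+2\sum_{(i,j,k)}C_{kk}\bigl(|z_iz_j|+z_iz_j\bigr)\ge0\quad\text{for all } z\in\R^3 .$$
For $d=(d_{12},d_{13},d_{23})$ let $\BM(d)$ be the symmetric matrix with diagonal $C_{11},C_{22},C_{33}$ and off-diagonal entries $d_{ij}$, and let $\mathcal D$ be the box $\{d_{ij}\in[C_{ij},C_{ij}+2C_{kk}]\}$. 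Since $z^T\BM(d)z$ is linear in each $d_{ij}$, maximizing each term at the endpoint matching the sign of $z_iz_j$ gives
$$q(z)=\max_{d\in\mathcal D}z^T\BM(d)z .$$

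\emph{Step 2: polyconvexity in terms of the same box.} For the converse direction, add the null Lagrangians $t_{ij}(\xi_{ii}\xi_{jj}-\xi_{ij}\xi_{ji})$ to $f_\BC$. In each pair $(\xi_{ij},\xi_{ji})$ the shear part becomes $C_{kk}(\xi_{ij}^2+\xi_{ji}^2)+(2C_{kk}-t_{ij})\xi_{ij}\xi_{ji}$. This is convex exactly when $t_{ij}\in[0,4C_{kk}]$. Meanwhile the normal part becomes $z^T\BM(d)z$ in the diagonal variables, with $d_{ij}=C_{ij}+t_{ij}/2\in\mathcal D$. So $f_\BC$ is polyconvex as soon as there is some $d\in\mathcal D$ with $\BM(d)\succeq0$.

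Thus the theorem reduces to the following interchange of min and max:
$$\min_{|z|=1}\max_{d\in\mathcal D}z^T\BM(d)z\ge0\ \Longrightarrow\ \max_{d\in\mathcal D}\min_{|z|=1}z^T\BM(d)z\ge0 .$$

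\emph{Step 3: convexification.} I would lift $zz^T$ to the convex compact set $\mathcal E=\{\BZ\succeq0,\ \mathrm{tr}\,\BZ=1\}$. The pairing $\langle\BM(d),\BZ\rangle$ is bilinear on the compact convex set $\mathcal D\times\mathcal E$, so von Neumann's minimax theorem gives
$$\max_d\min_{\BZ}\langle\BM(d),\BZ\rangle=\min_{\BZ}\Phi(\BZ),\qquad \Phi(\BZ)=\sum_iC_{ii}Z_{ii}+2\sum\bigl(C_{ij}Z_{ij}+2C_{kk}(Z_{ij})_+\bigr).$$
It therefore suffices to show $\Phi\ge0$ on $\mathcal E$, knowing that $\Phi\ge0$ on rank-one matrices (this is Step 1).

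\emph{Step 4: reduction to rank one (the main obstacle).} This is where the specific $3\times3$ structure must be used, since for general convex piecewise-linear $\Phi$ the minimum over $\mathcal E$ need not sit at a rank-one point.
\begin{itemize}
\item First fix the diagonal of $\BZ$, so that $\BZ=\BD\BR\BD$ with $\BR$ in the $3\times3$ elliptope.
\item Then fix the sign pattern of $(R_{12},R_{13},R_{23})$. On each such orthant $\Phi$ is linear, so its minimum is attained at an extreme point of (elliptope $\cap$ closed orthant).
\item These extreme points are either rank-one points of the elliptope, or points lying on a coordinate hyperplane $R_{ij}=0$.
\item In the latter case I would try to handle the face by decoupling. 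With $R_{ij}=0$ the problem involves only the pairs $(i,k)$ and $(j,k)$. I would then show that $\BR$ can be replaced by a rank-one $\pm1$-pattern matrix, or by a convex combination of rank-one matrices sharing the same signs on the remaining entries, without increasing $\Phi$; in the fully degenerate case this reduces to $2\times2$ copositivity-type checks.
\end{itemize}
If this face analysis becomes messy, the fallback is the acoustic-determinant route: show that failure of strict positive definiteness forces infinitely many zeros of $f$, and then invoke Quarez's theorem as in Theorem~\ref{Thm:2.4}.
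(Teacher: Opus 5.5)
\textbf{There is a genuine gap in Step~4.}

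Steps 1--3 are correct and match the paper. Your $q$ is the paper's $E$ (Lemma~\ref{Lem:3.2}(i)). Your box $\mathcal D$ is the paper's $Q$ under $d_{ij}=C_{ij}+2t_k$. Your null-Lagrangian splitting is (\ref{3.3}), and $\Phi$ is the paper's $F$, with the same minimax.

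The gap is Step~4, which carries all the content. Once you fix the diagonal, the claimed dichotomy for extreme points is false. The $3\times3$ elliptope has rank-two extreme points filling its curved boundary. For example, $R=\tfrac{3}{2}I-\tfrac{1}{2}J$ ($J$ the all-ones matrix) has rank two and all off-diagonal entries equal to $-\tfrac12$. It is the unique minimizer of $\langle J,R\rangle$ over the elliptope. So if the $C_{ij}$, $i\ne j$, are positive and equal and $\BD$ is scalar, this point is the unique minimizer of the linear piece of $R\mapsto\Phi(\BD R\BD)$ on the negative orthant. It is neither rank one nor on a coordinate hyperplane.

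A rank-two candidate with all off-diagonal entries nonzero is exactly the hard case. It is essentially the paper's case $\dim V_0=2$ with all $n_k\neq0$, which the paper handles with the saddle pair, the sign conditions (\ref{3.18}) and the vector $n\times e_k$. Your proposal does not treat it.

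The face step is also wrong as phrased. $\pm1$-pattern matrices have no zero entries. Moreover, since $\Phi$ is convex, a decomposition $\BZ=\sum_k z_kz_k^T$ only gives $\Phi(\BZ)\le\sum_k q(z_k)$, which is the wrong direction. You get equality only if all pieces lie in one linearity region of $\Phi$, and on the face $Z_{ij}=0$ that forces $(z_k)_i(z_k)_j=0$ for every $k$.

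The Quarez fallback is not an argument either. A form on the boundary of the rank-one convex cone typically has only finitely many zeros, so you would need exactly the structural input that is missing.

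\textbf{How to repair it.} Do not fix the diagonal; work on the spectraplex $\mathcal E$ directly. On $\mathcal E\cap O_\sigma$, with $O_\sigma=\{\sigma_{ij}Z_{ij}\ge0\}$, $\Phi$ is linear. An extreme point with all $\sigma_{ij}Z_{ij}>0$ is already extreme in $\mathcal E$, hence of the form $zz^T$, where $\Phi=q(z)\ge0$.

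The remaining case is a coordinate face, say $Z_{12}=0$. Then $\BZ\succeq0$ gives $Z_{33}\ge Z_{13}^2/Z_{11}+Z_{23}^2/Z_{22}$; if $Z_{11}$ or $Z_{22}$ vanishes, drop that term, since the corresponding off-diagonal entry vanishes too. Hence $\BZ$ is the sum of a positive semidefinite matrix supported on $\{1,3\}$ and one supported on $\{2,3\}$. Each of these is a sum of matrices $zz^T$ with $z_1z_2=0$ and, for $i=1,2$, $z_iz_3$ either zero or of the sign of $Z_{i3}$. Over such a decomposition $\Phi$ is additive, so $\Phi(\BZ)=\sum_k q(z_k)\ge0$.

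With this change your route is a complete proof and genuinely different from the paper's. It replaces the saddle pair, the eigenspace $V_0$ and complementary slackness with a direct extreme-point and Schur-complement argument. As submitted, however, Step~4 does not close.
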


\begin{proof}
We need to prove, that given 
$$
f(\bm{\xi})=\sum_{i,j=1}^3 C_{ij}\xi_{ii}\xi_{jj}+C_{44}(\xi_{23}+\xi_{32})^2+C_{55}(\xi_{31}+\xi_{13})^2+C_{66}(\xi_{12}+\xi_{21})^2,
$$
with $f(x\otimes y)\geq 0,$ the biquadratic form $f(x\otimes y)$ is a sum of squares of linear forms in $x_iy_j$ variables. For simplicity we introduce the variables $u_i=x_iy_i,$ $i=1,2,3,$ $v_k=x_iy_j+x_jy_i,$ and $w_k=x_iy_j-x_jy_i$ for all $\{i,j,k\}=\{1,2,3\},$ and set $u=(u_1,u_2,u_3),v=(v_1,v_2,v_3),w=(w_1,w_2,w_3)\in\mathbb R^3.$ We will prove that $f(x\otimes y)$ is a sum of squares of linear forms in $u_i,v_i,$ and $w_i$ variables. 
Consecutively, we have the identity in the new variables:
$$
f(x\otimes y)=u^T\BB u+C_{44}v_1^2+C_{55}v_2^2+C_{66}v_3^2.
$$
Let $\BT(t)$ be the symmetric $3\times3$ matrix with zero diagonal entries and off-diagonals entries given by $T_{12}=t_3,\ T_{13}=t_2,\ T_{23}=t_1$. Then keeping in mind the identities $v_k^2-w_k^2=4u_iu_j$ for all $\{i,j,k\}=\{1,2,3\},$ we obtain 
for any $t=(t_1,t_2,t_3)\in\R^3$, the representation for the energy:
\begin{equation}
\label{3.3}
f(x\otimes y)=u^T[\BB+2\BT(t)]u+\sum_{k=1}^{3}(C_{k+3,k+3}-t_k)v_k^2+\sum_{k=1}^{3}t_kw_k^2
\end{equation}
Note that from $f(e_i\otimes e_j)\geq 0$ with $i\neq j,$ we have $C_{44},C_{55},C_{66}\geq0.$ Hence, if we could prove that there exists a $t\in Q=[0,C_{44}]\times[0,C_{55}]\times[0,C_{66}]$ such that the matrix $\BB+2\BT(t)$ is positive semidefinite, then (\ref{3.3}) will imply that $f( x\otimes y)$ is a sum of squares, thus polyconvex. 

Next, define the associated function $F\colon R^{3\times 3}_{sym}\to\mathbb R$ on symmetric  $3\times 3$ matrices as follows:
\begin{align}
\label{3.4}  
F(\BX)&=\langle \BB,\BX \rangle+4\big(C_{44}\pos{X_{23}}+C_{55}\pos{X_{13}}+C_{66}\pos{X_{12}}\big)\\ \nonumber
&=\sum_{i,j=1}^3B_{ij}X_{ij}+4\big(C_{44}\pos{X_{23}}+C_{55}\pos{X_{13}}+C_{66}\pos{X_{12}}\big),
\end{align}
where $\pos{x}=\max(x,0).$ Clearly, $F$ is a convex function, being a linear part plus a nonnegative combination of the
convex maps $\BX\mapsto\pos{X_{ij}}$. Consider further the spectraplex 
\begin{equation}
\label{3.5}
\CS=\{\BX\in\mathbb R^{3\times 3 } \ : \ \BX\succeq0, \  \mathrm{Tr}\BX=1\}.
\end{equation}
It is known from the linear programming and convex optimization duality theory [\ref{bib:Rockafellar}], that the extreme points of $\CS$ are the rank-one 
matrices $\BX=x\otimes x$ with $\mathrm{Tr}(x\otimes x)=|x|^2=1$. Set for the sake of simplicity
\begin{equation}
\label{3.6}
E(x):=F(x\otimes x)=x^T\BB x+4\sum_{k=1}^3C_{k+3,k+3}\pos{x_ix_j},
\end{equation}
where for each pair $(i,j),$ the index $k$ is the third one. The following is a key lemma in the proof. 

\begin{lemma}
\label{Lem:3.2}
The following statements hold.
\begin{enumerate}
\item[(i)] The energy $f$ is rank-one convex if and only if  $E(x)\ge0$ for all $x\in\R^3$ with $|x|=1,$ or 
equivalently if and only if $F(x\otimes x)\geq 0$ for all extreme points $x\otimes x\in \CS.$ 

\item[(ii)] The smallest eigenvalue of the matrix $\BB+2\BT(t)$ satisfies 
$$\max_{t\in Q} \lambda_{\min}\!\big(\BB+2\BT(t)\big)=\min_{\BX\in\CS}F(\BX).$$
Hence a feasible $t\in Q$ with $\BB+2\BT(t)$ positive semidefinite exists if and only if $F\ge0$ on all of $\CS$.
\end{enumerate}
\end{lemma}

\begin{proof}[Proof of Lemma~3.2]

\noindent Part \textbf{(i)}. Assume first $E(x)\ge0$ for all $x\in\R^3$ with $|x|=1.$ Since $E$ is homogeneous of degree two
in $x,$ then we have $E(z)\geq 0$ for all $z\in\mathbb R^3.$ From $C_{k}\ge 0$ for $k=4,5,6$ and by the obvious inequality 
$v_k^2=w_k^2+4u_iu_j\ge4\pos{u_iu_j}$, we obtain for the energy
\begin{align}
\label{3.7} 
f(x\otimes y)&=u^T\BB u+\sum_{k=1}^3C_{k+3,k+3} v_k^2\\ \nonumber
& \geq u^T\BB u+4\sum_{k=1}^3C_{k+3,k+3}\pos{u_iu_j}\\ \nonumber
&=E(u).
\end{align}
Therefore we obtain $f(x\otimes y)\geq E( u)\geq 0$ and thus $f $ is rank-one convex. Assume now $f$ is
rank-one convex. The idea is to test the inequality $f(x\otimes y)\geq 0$ with a suitably chosen rank-one matrix 
$x\otimes y.$ For any given $z\in\mathbb R^3,$ we choose $x_i=\sqrt{|z_i|}$ and $y_i=\sgn(z_i)\sqrt{|z_i|}.$ This gives $ u=z$ and $v_k=\big(\sgn(z_i)+\sgn(z_j)\big)\sqrt{|z_iz_j|},$ with the prefactor $\sgn(z_i)+\sgn(z_j)$ being  $\pm2$ when $z_i,z_j$ share a sign and $0$ otherwise. Consequently we get
$v_k^2=4\pos{z_iz_j}$ and hence $f(x\otimes y)=E(z).$ Hence $f(x\otimes y)\geq 0$ for all $x,y\in\mathbb R^3$ implies $E(z)\geq 0$ for all $z\in\mathbb R^3.$ This proves (i).\\

\noindent Part \textbf{(ii)}. For any fixed $t\in\mathbb R^3,$ we have that the minimal eigenvalue satisfies
 \begin{equation}
\label{3.8} 
\lambda_{\min}(\BB+2\BT(t))=\min_{|x|=1}x^T\left(\BB+2\BT(t)\right)x
\end{equation} 
 by the spectral theorem. Because the spectraplex $\CS$ is convex and compact with extreme points $x\otimes x\in \CS$ with $|x|=1,$ 
 the linear functional $G(\BX)=\langle \BX,\BB+2\BT(t)\rangle\colon \mathbb R^{3\times 3}_{sym}\to\mathbb R$ attains its  
 minimum on $\CS$ at an extreme point in $\CS$ by the finite dimensional Krein-Milman (Minkowski-Carath\'eodory) theorem, hence we have 
 \begin{align}
\label{3.9} 
\min_{\BX\in\CS}\langle \BX,\BB+2\BT(t)\rangle&=\min_{\Bx\otimes\Bx\in\CS}\langle \Bx\otimes\Bx,\BB+2\BT(t)\rangle\\ \nonumber
&=\min_{|x|=1}x^T\left(\BB+2\BT(t)\right)x.
\end{align}  
 Putting together (\ref{3.8}) and  (\ref{3.9}) we arrive at
 \begin{equation}
\label{3.10} 
 \lambda_{\min}(\BB+2\BT(t))=\min_{\BX\in\CS}\langle \BX,\BB+2\BT(t)\rangle.
 \end{equation}
Next, for any fixed $\BX\in\CS$ we have
\begin{equation}
\label{3.11} 
\max_{t\in Q}\langle \BX,\BB+2\BT(t)\rangle=\langle \BX,\BB\rangle+4\sum_{k=1}^3C_{k+3}\pos{X_{ij}}=F(\BX),
\end{equation}
since $t_k\in[0,C_{k+3}]$ multiplies $4X_{ij}$ and is driven to the endpoint matching $\sgn X_{ij}$. Now the function
$G(\BX)=\langle \BX,\BB+2\BT(t)\rangle$ is affine in both $\BX$ and $t,$ and $Q$ and $\CS$ are
compact convex sets, thus Sion's minimax theorem~[\ref{bib:Sion}] gives
\begin{equation}
\label{3.12} 
\max_{\Bt\in Q}\min_{\BX\in\CS}\langle \BX,\BB+2\BT(t)\rangle=\min_{\BX\in\CS}\max_{t\in Q}\langle \BX,\BB+2\BT(t)\rangle.
\end{equation} 
Finally, combining (\ref{3.10})-(\ref{3.12}), we discover 
 \begin{equation}
\label{3.13} 
 \max_{t\in Q}\lambda_{\min}(\BB+2\BT(t))=\min_{\BX\in\CS}F(\BX).
 \end{equation} 
This proves (ii) and the lemma. 
\end{proof}

Getting back to the proof of the theorem, we have by (ii) of the lemma, that it is sufficient to show that $\min_\CS F\geq 0.$ Assume in contradiction $\min_\CS F=m<0.$ We will show, that then there exists a unit vector $x_0\in\mathbb R^3$ such that 
 $E(x_0)=m<0$. Take a pair $(t^0,\BX^0)\in Q\times \CS$ from (ii) of the lemma so that 
$\BX^0\in\CS$ minimizes $F$ on $\CS$ and 
 \begin{equation}
\label{3.14} 
\langle \BX^0,\BB^0\rangle=m=\lambda_{\min}(\BB^0),\quad \text{with }\quad \BB^0:=\BB+2\BT(t^0).
 \end{equation} 
 Let $V_0$ be the least eigenspace of $\BB^0$. Equation (\ref{3.14}) implies that $\BX^0$ is supported on $V_0.$ Indeed, from 
 $\BX^0\in \CS,$ due to the finite dimensional Krein-Milman theorem, $\BX^0$ is a convex combination of the extreme points of $\CS,$ i.e.,
 \begin{equation}
\label{3.15} 
\BX^0=\sum_{k=1}^nc_k x_k\otimes x_k,\quad |x_k|=1,  \ \ c_k\geq 0,  \ \ \sum_{k=1}^n c_k=1.
\end{equation}
This implies on one hand that,
\begin{equation}
\label{3.16} 
\lambda_{\min}(\BB^0)=\langle \BX^0,\BB^0\rangle=\sum_{k=1}^nc_kx_k^T\BB^0 x_k.
\end{equation}
and on the other hand the spectral theorem yields
\begin{align}
\label{3.17} 
\sum_{k=1}^nc_kx_k^T\BB^0 x_k&\geq  \sum_{k=1}^nc_k\lambda_{\min}(\BB^0)|x_k|^2\\ \nonumber
&=\lambda_{\min}(\BB^0),
\end{align}
hence we have equality in (\ref{3.17}), i.e., each $x_k$ is in $V_0$ and so are the columns and raws of $\BX^0$. 
The pair's other half $t^0$ maximizes 
$$\langle \BX^0,\BB+2\BT(t)\rangle=\langle \BX^0,\BB\rangle+4\sum_{i<j}t_{ij}X^0_{ij}$$
over the box $Q$ maximizing each $t_{ij}\mapsto4t_{ij}X^\ast_{ij}$ on $[0,C_{k+3}]$ separately, yielding the sign conditions
\begin{equation}
\label{3.18}
  t^0_{ij}=C_{k+3}\Rightarrow X^0\ge0,\qquad
  t^0_{ij}=0\Rightarrow X^0_{ij}\le0,\qquad
  0<t^0_{ij}<C_{k+3}\Rightarrow X^0_{ij}=0,
\end{equation}
for each pair $(i,j)$ with $i<j.$ Writing 
$$b_{ij}(z):=C_{k+3}\pos{z_iz_j}-t^\ast_{ij}z_iz_j\ge0,\quad z\in\mathbb R^3,$$
we have
 $$E(z)=z^T\BB^0 z+4\sum_{i<j}b_{ij}(z),$$
so for a unit $z\in V$, where one has $z^T\BB^0 z=\lambda_{\min}(\BB^0)=m$, it suffices
to find a unit $z$ with $b_{ij}(z)=0$ for all three pairs $(i,j).$ By (\ref{3.18}) this 
reqires: $z_iz_j\ge0$ if $t^0_{ij}=C_{k+3}$, $z_iz_j\leq 0$ if $t^0_{ij}=0$, and
$z_iz_j=0$ if $0<t^0_{ij}<C_{k+3}.$ 
If $\dim V_0=1$ then $\BX^0=x^0 \otimes x^{0}$ for some $x^{0}\in V$ with $|x^0|=1$ and
$E(x^0)=F(\BX^0)=m$, which contradicts (i) of the lemma. If $\dim V_0=3,$ then $\BB^0=m I,$ thus 
 $z=e_1$ works ($z_iz_j=0$). Otherwise assume $\dim V_0=2.$ Let $n=(n_1,n_2,n_3)$ be a unit eigenvector of the other 
 eigenvalue $\lambda>m$ of $\BB^0.$ Then $V_0=n^\perp$. If some $n_k=0$, then $e_k\perp n$, so $e_k\in V_0$
and $e_k^T\BB^0 e_k=m,$ thus $z=e_k$ works. Assume finally $n_k\neq 0$ for $k=1,2,3.$ As 
$\BX^0 n=0$, we have
$$0=n^T\BX^0 n=\sum_{k=1}^3 X^0_{kk}n_k^2+2\sum_{i<j}X^0_{ij}n_in_j,$$
thus 
\begin{equation}
\label{3.19}
 \sum_{i<j}X^0_{ij}n_in_j =-\tfrac12 \sum_{k=1}^3 X^0_{kk}n_k^2\leq 0.
 \end{equation}
Note that the sum in (\ref{3.19}) is strictly negative, because equality would force $X^0_{kk}n_k^2=0$, hence
(as $n_k\ne0$) $X^0_{kk}=0$ for all $k$ which contradicts  $\operatorname{Tr}\BX^0=1$. 
Thus some pair $(i,j)$ has $X^0_{ij}n_in_j<0$, in particular $X^0_{ij}\ne 0$, so by (\ref{3.18}) that pair is pinned and
$\sgn X^0_{ij}$ is the sign required of $z_iz_j$. Let $k$ be the third index. If $V_0=\{x_k=0\},$ then $z=e_i\in V_0$ works. 
Otherwise $\dim(V_0\cap\{x_k=0\})=1,$ since two nonidentical crossing planes in $\R^3$ meet in at a straight line. 
Thus we can choose a unit vector $z\in V_0\cap\{x_k=0\}=\operatorname{span}(n\times e_k).$ Then
$z_k=0$, so $b_{ik}(z)=b_{jk}(z)=0,$ and $(n\times e_k)_i(n\times e_k)_j=-n_in_j$ gives
$\sgn(z_iz_j)=-\sgn(n_in_j)=\sgn X^0_{ij}$, the required sign, so
$b_{ij}(z)=0$. Hence $b_{ij}(z)=0$ for all pairs and $E(z)=m<0$ which contradicts the rank-one convexity of $f$
thanks to the Lemma. This completes the proof of the theorem. 

\end{proof}

\begin{corollary}
Rank-one convexity implies polyconvexity in all symmetry subclasses of the orthotropic one.
\end{corollary}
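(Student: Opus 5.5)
The plan is to reduce the corollary directly to Theorem~\ref{Thm:3.1}, by showing that every tensor in a symmetry subclass of the orthotropic class is, in a suitably chosen Cartesian frame, an orthotropic tensor of the form (\ref{3.1}). The subclasses in question, following the classification chart recalled in the introduction, are the isotropic, cubic, hexagonal (transversely isotropic) and tetragonal classes. The trigonal class is excluded, since it is not contained in the orthotropic one.

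For each subclass I would check that its symmetry group contains the three reflections $x_\alpha\to -x_\alpha$ through three mutually orthogonal planes. Concretely:
\begin{itemize}
\item for the isotropic class this is immediate, since the group is all of $O(3)$;
\item for the cubic class, take the planes orthogonal to the cube axes;
\item for the tetragonal class, take the planes orthogonal to the fourfold axis and to two orthogonal twofold axes. For the 7-constant tetragonal class, one first rotates about the fourfold axis to eliminate $C_{16}$, the standard normal form, which places it in the 6-constant form.
\item for the hexagonal class, use the plane orthogonal to the sixfold axis and any two orthogonal planes containing it.
\end{itemize}
Once such a frame is fixed, the sign-change argument given at the start of Section~\ref{Sec:3} shows that all $C_{ijkl}$ with an odd number of repetitions of some index vanish. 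Hence the Voigt matrix has the block form (\ref{3.1}), with possibly additional equalities among the nine moduli.

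Next I would invoke Theorem~\ref{Thm:3.1} in that frame: rank-one convexity gives $f(x\otimes y)$ as a sum of squares of bilinear forms, i.e.\ polyconvexity. Finally, I need invariance under the change of frame. Both rank-one convexity and polyconvexity of a quadratic form are preserved under $\BGx\mapsto \BR\BGx\BR^T$ with $\BR\in O(3)$, because this map sends rank-one matrices to rank-one matrices and sends null Lagrangians to null Lagrangians. It follows that the property holds in the original frame as well.

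The only step requiring care is the verification of the orthotropic reduction for the 7-constant tetragonal class, together with a justification that the listed classes exhaust the subclasses. Beyond that, the argument is a direct specialization of Theorem~\ref{Thm:3.1}.
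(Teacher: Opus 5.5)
Your proposal is correct and follows the route the paper intends: the paper gives no separate proof, treating the corollary as an immediate specialization of Theorem~\ref{Thm:3.1}, since the isotropic, cubic, hexagonal and tetragonal classes are contained in the orthotropic form (\ref{3.1}). Your frame-invariance step, and the rotation about the fourfold axis that removes $C_{16}$ in the 7-constant tetragonal case, make explicit details that the paper leaves implicit.
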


\section{Forms with trigonal symmetry}
\label{Sec:4}
\setcounter{equation}{0}

\subsection{Review and some theorems}
\label{Sub:4.1}

The first class that is not orthotropic-reducible is trigonal. Trigonal elasticity tensors characterize materials with threefold rotational symmetry, containing six independent elastic constants. In the Voigt notation, the linear elastic tensor has the form 
\begin{equation}
\label{4.1}
\BC=
\begin{bmatrix}
C_{11} & C_{12} & C_{13} & C_{14} & 0 & 0\\
C_{12} & C_{11} & C_{13} & -C_{14} & 0 & 0\\
C_{13} & C_{13} & C_{33} & 0 & 0 & 0\\
C_{14} & -C_{14} & 0 & C_{44} & 0 & 0\\
0 & 0 & 0 & 0 & C_{44} & C_{14}\\
0 & 0 & 0 & 0 &  C_{14} & \frac{C_{11}-C_{12}}{2}
\end{bmatrix}.
\end{equation}
The coupling $C_{14}$ is the first irreducible coupling in the usual hierarchy of three-dimensional crystallographic elasticity classes. Denote as before by $\BB$ the $3\times 3$ left upper block of $\BC$ and $C_{66}=\frac{C_{11}-C_{12}}{2}.$ The associated biquadratic in the variables $u,v$ and $w$ is given by:
\begin{equation} 
\label{4.2}
f(x\otimes y)=u^T\BB u+C_{44}(v_1^2+v_2^2)+C_{66}v_3^2+ 2C_{14}\big(u_1v_1-u_2v_1+v_2v_3\big).
\end{equation}
where the coupling $C_{14}$ links the shear $v_1$ to $u_1-u_2$ and $v_2$ to $v_3$. The acoustic $y$-tensor of $f$ satisfies 
$\BA(y)\succeq0$ for all $y\in\mathbb R^3.$ Note also, that rank-one convexity tested at $e_1\otimes e_2$ and $e_2\otimes e_3$ forces $C_{44},C_{66}\ge0$. 
\begin{lemma}
\label{Lem:4.1}
If a trigonal tensor is rank-one convex and not polyconvex, then
$$
 C_{44}>0,\quad C_{66}>0,\quad C_{14}\neq 0.
$$
\end{lemma}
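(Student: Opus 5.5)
We argue by contraposition. We show that if $C_{14}=0$, or $C_{44}=0$, or $C_{66}=0$, then a rank-one convex trigonal form is polyconvex. Throughout, $C_{44},C_{66}\ge 0$ is known from testing at $e_1\otimes e_2$ and $e_2\otimes e_3$.

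\textbf{Case $C_{14}=0$.} The Voigt matrix (\ref{4.1}) becomes block diagonal, with normal block $\BB$ and shear block $\mathrm{diag}(C_{44},C_{44},C_{66})$. This is exactly an orthotropic tensor of the form (\ref{3.1}) with $C_{55}=C_{44}$. Theorem~\ref{Thm:3.1} then gives polyconvexity directly.

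\textbf{Case $C_{44}=0$ (with $C_{14}\neq0$ allowed).} We use rank-one convexity to force $C_{14}=0$, which reduces to the previous case. The plan is to test (\ref{4.2}) on rank-one matrices where the $C_{44}$ terms are switched off but the coupling $2C_{14}(u_1-u_2)v_1$ survives.

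1. Take $x=(a,0,1)$ and $y=(b,0,s)$. Then $u=(ab,0,s)$, $v_1=x_2y_3+x_3y_2=0$, $v_3=0$ and $v_2=as+b$. The terms $v_1^2$ and $v_2v_3$ vanish, so this choice is not useful.

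2. Instead take $x=(1,0,\epsilon)$ and $y=(0,1,0)$. Then $u=0$, $v_3=x_1y_2+x_2y_1=1$, $v_1=x_2y_3+x_3y_2=\epsilon$ and $v_2=0$. This gives $f=C_{44}\epsilon^2+C_{66}$, which carries no coupling either.

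3. A cleaner route is through the acoustic tensor. For $y=e_2$, compute the $3\times3$ matrix $\BA(e_2)$ from (\ref{4.2}) with $x$ free. Here $u=(0,x_2,0)$, $v_1=x_3$, $v_2=0$ and $v_3=x_1$, so
$$f=C_{11}x_2^2+C_{44}x_3^2+C_{66}x_1^2-2C_{14}x_2x_3 .$$
If $C_{44}=0$, positive semidefiniteness of the $(x_2,x_3)$ block $\begin{bmatrix}C_{11}&-C_{14}\\-C_{14}&0\end{bmatrix}$ forces $C_{14}=0$.

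\textbf{Case $C_{66}=0$.} We argue the same way with a direction $y$ that isolates the $v_2v_3$ coupling. Take $y=e_3$. Then $u=(0,0,x_3)$, $v_1=x_2$, $v_2=x_1$ and $v_3=0$, so the coupling term is $2C_{14}v_2v_3=0$; this direction does not help. Next take $y=e_1$. Then $u=(x_1,0,0)$, $v_1=0$, $v_2=x_3$ and $v_3=x_2$, so
$$f=C_{11}x_1^2+C_{44}x_3^2+C_{66}x_2^2+2C_{14}x_3x_2 .$$
If $C_{66}=0$, the $(x_2,x_3)$ block $\begin{bmatrix}0&C_{14}\\C_{14}&C_{44}\end{bmatrix}$ must be positive semidefinite, which forces $C_{14}=0$.

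In both degenerate cases we therefore land in the case $C_{14}=0$, where the tensor is orthotropic and Theorem~\ref{Thm:3.1} applies. Contrapositively, if the form is rank-one convex and not polyconvex, then $C_{44}>0$, $C_{66}>0$ and $C_{14}\neq0$.

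The only delicate point is matching the Voigt definitions of $v_k$ (namely $v_1=x_2y_3+x_3y_2$, $v_2=x_3y_1+x_1y_3$, $v_3=x_1y_2+x_2y_1$) with the coupling pattern in (\ref{4.2}). This has to be checked carefully so that each chosen direction $y=e_2$ or $y=e_1$ genuinely produces a $2\times2$ principal minor with a zero diagonal entry next to the off-diagonal entry $\pm C_{14}$.
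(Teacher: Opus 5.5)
Your proof is correct. The cases $C_{14}=0$ and $C_{66}=0$ match the paper's argument exactly: the orthotropic reduction via Theorem~\ref{Thm:3.1}, and the $2\times2$ block of $\BA(e_1)$ forcing $C_{14}=0$.

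The case $C_{44}=0$ goes by a genuinely different route. The paper observes that $\BA(e_3)=\mathrm{diag}(0,0,C_{33})$ has rank at most one, so the biquadratic has infinitely many real zeros. It then invokes Quarez's theorem to conclude that the form is a sum of squares, hence polyconvex. You instead read off the block $\begin{bmatrix}C_{11}&-C_{14}\\-C_{14}&C_{44}\end{bmatrix}$ of $\BA(e_2)$; your computation $f=C_{11}x_2^2+C_{44}x_3^2+C_{66}x_1^2-2C_{14}x_2x_3$ is right. This forces $C_{14}=0$ and sends you back to the orthotropic case.

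Your route is self-contained and avoids the external sum-of-squares theorem. It is also more informative: it shows that $C_{44}=0$ with $C_{14}\neq0$ cannot occur at all for a rank-one convex trigonal tensor. The paper's route does not use the specific coupling pattern, only the rank drop of $\BA(e_3)$, but it pays for that by relying on Quarez's result.

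You could even have handled both degenerate cases at once with $\BA(e_1)$, whose block gives $C_{44}C_{66}\ge C_{14}^2$. The paper itself notes this in the proof of Theorem~\ref{Thm:4.2}. Finally, drop your exploratory steps 1 and 2 and the test at $y=e_3$ from a written proof, since they are dead ends.
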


\begin{proof}
If $C_{14}=0$, the tensor is orthotropic and is polyconvex by Theorem~\ref{Thm:3.1}. If $C_{66}=0$, positivity of
$\BA(e_1)$ forces $C_{14}=0$. If $C_{44}=0$, then
$\BA(e_3)=\operatorname{diag}(0,0,C_{33})$ has rank at most one, hence the
acoustic biquadratic has infinitely many zeros on $\mathbb P^2\times \mathbb P^2.$ By a theorem of Quarez 
[\ref{bib:Quarez}, Thm.~5.4], it is a sum of squares and therefore polyconvex.
\end{proof}

Since we are interested in nontrivial extremals, we assume in what follows that $C_{44},C_{66}>0$ and $C_{14}\neq 0$. Next we prove, that the rank-one convexity-polyconvexity gap for trigonal symmetry requires an indefinite normal block $\BB.$

\begin{theorem}[Indefinite normal block]
\label{Thm:4.2}
If $\BB$ is positive semi-definite, then rank-one convexity implies polyconvexity, both
holding if and only if $C_{44}C_{66}\ge C_{14}^2$. 
\end{theorem}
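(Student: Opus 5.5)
Assume $\BB\succeq0$. The plan is to prove three things: necessity of $C_{44}C_{66}\ge C_{14}^2$ from rank-one convexity, an explicit sum-of-squares decomposition of (\ref{4.2}) when this inequality holds, and then to combine them. Since polyconvexity implies rank-one convexity, this establishes both the implication and the stated equivalence.

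\textbf{Necessity.} We test rank-one convexity on matrices $x\otimes y$ for which $u=0$ while $v_2,v_3$ are free. The coupling term $2C_{14}v_2v_3$ and the terms $C_{44}v_2^2+C_{66}v_3^2$ then survive, and $v_1$ should vanish. Take $y=e_1$ and $x=(0,x_2,x_3)$. Then $u_1=x_1y_1=0$, $u_2=u_3=0$, $v_1=x_2y_3+x_3y_2=0$, $v_2=x_3y_1+x_1y_3=x_3$ and $v_3=x_1y_2+x_2y_1=x_2$. Hence
$$f(x\otimes e_1)=C_{44}x_3^2+C_{66}x_2^2+2C_{14}x_2x_3 .$$
This is exactly the $2\times2$ minor of $\BA(e_1)$, already used in Lemma~\ref{Lem:4.1}. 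Its nonnegativity for all $x_2,x_3$ forces $C_{44}C_{66}\ge C_{14}^2$, together with $C_{44},C_{66}\ge0$. This step needs no assumption on $\BB$.

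\textbf{Sufficiency (the main step).} We must write $f(x\otimes y)$ as a sum of squares of linear forms in $u,v,w$, equivalently in the bilinear forms $x_iy_j$. The difficulty is the term $2C_{14}(u_1-u_2)v_1$, which couples the normal block to the shear $v_1$. The first attempt is the trivial split
$$f=u^T\BB u+\big(C_{44}v_2^2+2C_{14}v_2v_3+C_{66}v_3^2\big)+C_{44}v_1^2+2C_{14}(u_1-u_2)v_1 .$$
The middle bracket is PSD by the inequality. The last two terms cannot be absorbed directly, because $\BB$ need not dominate $(u_1-u_2)^2$ in the required way.

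The remedy is the trick from Theorem~\ref{Thm:3.1}. Using the identities $v_k^2=w_k^2+4u_iu_j$, we move part of $v_1^2$ or $v_3^2$ into $u$-quadratic terms plus $w$-squares. Concretely, $C_{66}v_3^2=C_{66}w_3^2+4C_{66}u_1u_2$, so that $(u_1-u_2)^2$-type mass can be created. With $C_{66}=(C_{11}-C_{12})/2$, the form
$$u^T\BB u+4\tau u_1u_2=C_{11}(u_1^2+u_2^2)+2(C_{12}+2\tau)u_1u_2+\dots$$
can be tuned. I would instead seek a $4\times4$ Gram matrix in the variables $(u_1,u_2,u_3,v_1)$, together with a $2\times2$ block in $(v_2,v_3)$ and free $w$-multipliers $t_k\ge0$ as in (\ref{3.3}). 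Then I choose $t$ so that the whole representation
$$u^T[\BB+2\BT(t)]u+\dots$$
is PSD. The natural candidate is to use the full $v_2,v_3$ block exactly at the critical ratio and to pair $v_1$ with $u_1-u_2$ via the Schur complement:
$$C_{44}v_1^2+2C_{14}(u_1-u_2)v_1\ge -\tfrac{C_{14}^2}{C_{44}}(u_1-u_2)^2\ge -C_{66}(u_1-u_2)^2 .$$
The deficit $-C_{66}(u_1-u_2)^2$ must then be compensated. Here we use $C_{66}=(C_{11}-C_{12})/2$, which gives $(u_1-u_2)^2C_{66}=\tfrac12(C_{11}-C_{12})(u_1-u_2)^2$, together with $\BB\succeq0$ and a redistribution of $v_3^2=w_3^2+4u_1u_2$. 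I expect this balancing to be the obstacle: it amounts to checking that a specific $3\times3$ matrix, $\BB$ minus a multiple of $(e_1-e_2)(e_1-e_2)^T$ plus $2\BT(t)$, is PSD for some admissible $t$.

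If this direct Gram choice fails, the fallback is to reuse the minimax reasoning of Lemma~\ref{Lem:3.2}. The Schur-complemented $v_1$ term is simply carried into $F$, and one shows that the minimizer of $F$ on $\CS$ is attained at a rank-one point, which is then ruled out by rank-one convexity. In the semidefinite case one may alternatively exhibit soft directions and invoke Theorem~\ref{Thm:2.4} by verifying that $\det\BA(y)$ is a perfect square on the boundary case $C_{44}C_{66}=C_{14}^2$. The general case would then follow because adding $(C_{44}-C_{14}^2/C_{66})(v_1^2+v_2^2)$ preserves sums of squares.
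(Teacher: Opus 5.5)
Your necessity step is correct and is the paper's argument: it uses the $2\times2$ block of $\BA(e_1)$.

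\textbf{The gap.} The sufficiency step is left open. You reduce it to showing $\BB-\mu\,dd^T+2\BT(t)\succeq0$ for some admissible $t$, where $d=e_1-e_2$ and $\mu=C_{14}^2/C_{44}\le C_{66}$. You then call this the expected obstacle and do not resolve it.

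\textbf{The missing observation.} $d$ is an eigenvector of $\BB$, with $\BB d=(C_{11}-C_{12})d=2C_{66}d$. Since $|d|^2=2$, the matrix $\BB-C_{66}dd^T$ annihilates $d$ and agrees with $\BB$ on the invariant plane $d^\perp$. So it is positive semidefinite whenever $\BB$ is. The same then holds for $\BB-\mu dd^T=(\BB-C_{66}dd^T)+(C_{66}-\mu)dd^T$.

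Hence $t=0$ already works and no $w$-terms are needed. For $C_{44}>0$,
\[
f=u^T\Big(\BB-\tfrac{C_{14}^2}{C_{44}}\,dd^T\Big)u+C_{44}\Big(v_1+\tfrac{C_{14}}{C_{44}}(u_1-u_2)\Big)^2+\big(C_{44}v_2^2+2C_{14}v_2v_3+C_{66}v_3^2\big)
\]
is a sum of positive semidefinite forms in $(u,v)$ alone. So the Voigt matrix $\BC$ is positive semidefinite and $f$ is convex, hence polyconvex. If $C_{44}=0$, necessity forces $C_{14}=0$ and $f$ decouples.

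\textbf{Comparison with the paper.} With this line added, your route is a valid proof and mirrors the paper's. The paper takes the generalized Schur complement of $\BB$ in $\BC$, using the same eigenvector fact in the form $\BB^{\dagger}d=d/(2C_{66})$. You instead pivot on $C_{44}$ to eliminate $v_1$ first. That avoids the Moore--Penrose inverse and the range condition $\operatorname{ran}\BE\subseteq\operatorname{ran}\BB$, which the generalized Schur criterion needs and the paper does not state.

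\textbf{The fallbacks should be discarded.}

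The proposed redistribution $C_{66}v_3^2=C_{66}w_3^2+4C_{66}u_1u_2$ points the wrong way. Since $4\tau u_1u_2=\tau(u_1+u_2)^2-\tau(u_1-u_2)^2$, with $\tau\ge0$ it removes rather than creates $(u_1-u_2)^2$-mass; indeed $d^T\BT(t)d=-2t_3$. It also eats into the $(v_2,v_3)$ block you have already used in full, and that block is singular when $C_{44}C_{66}=C_{14}^2$.

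The perfect-square route through Theorem~\ref{Thm:2.4} fails in general. Take $C_{11}=2$, $C_{12}=C_{13}=0$, $C_{33}=1$, $C_{44}=C_{14}=1$, so $C_{66}=1$, $\BB\succeq0$, and we are in the equality case. Along $y=(1,0,s)$ one computes $\det\BA(y)=s^2(s^4-s^2+3)$. This is not the square of a polynomial in $s$, so $\det\BA$ is not a perfect square.
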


\begin{proof} The acoustic tensor at $e_1$ is 
$$
\BA(e_1)=C_{11}
\oplus 
\begin{bmatrix}C_{66}&C_{14}\\
C_{14}&C_{44}
\end{bmatrix},
$$
thus rank-one convexity forces $C_{44}C_{66} \geq C_{14}^2$. As $\BC$ is symmetric, it is sufficient to show that the generalized Schur complement of $\BB$ in $\BC$ is positive semidefinite as well.  Denote by $\BE, \BF,$ and $\BG$ the top right, bottom right, and bottom left $3\times 3$ blocks of $\BC$ respectively, so that the generalized Schur complement of $\BB$ in $\BC$ equals $\BC/\BB=\BF-\BG\BB^{\dagger}\BE,$ where $\BB^{\dagger}$ is the Moore-Penrose inverse of $\BB.$ Given the forms of $\BG$ and $\BE,$ calculating $\BC/\BB$ boils down to calculating $v^T\BB^{\dagger}v$ for $v=(1,-1,0).$  Since $\BB v=(C_{11}-C_{12})v=2C_{66}v,$ we have due to the symmetry of $\BB,$ that $v^T\BB=v^T\BB^T=(\BB v)^T=2C_{66}v^T.$ Consequenty, utilizing the identity $\BB\BB^{\dagger}\BB=\BB,$ we have on one hand $v^T\BB\BB^{\dagger}\BB v=v^T\BB v=4C_{66},$ and on the other hand $v^T\BB\BB^{\dagger}\BB v=2C_{66}v^T\BB^{\dagger}2C_{66}v=4C_{66}^2v^T\BB^{\dagger}v,$ hence we obtain  
$v^T\BB^{\dagger}v=1/C_{66}.$ Now a straightforward calculation gives
$$
\BC/\BB=(C_{44}-C_{14}^2/C_{66})
\oplus 
\begin{bmatrix}C_{44}&C_{14}\\
C_{14}&C_{66}
\end{bmatrix},$$ 
which is positive semidefinite when $C_{44}C_{66}\geq C_{14}^2$ and $C_{66}>0.$ This yields positive semidefiniteness of $\BC,$ thus polyconvexity of $f$. 

\end{proof}

\begin{remark}
\label{Rem:3.3}
If $\BB$ is positive semi-definite, then rank-one convexity of $f$ implies convexity of $f(x\otimes y)$ in the variables $u_i$ and $v_i$ only. 
\end{remark}

\subsection{A family of extreme rays}
\label{Sub:4.2}

With the use of AI engine Claude, we have found a family of extreme rays in the trigonal class. Below we provide the family with a proof of extremality employing the extremal determinant criterion. 

\begin{theorem}[Family of extremals]
\label{Thm:4.4}
Assume $k>0$ and define the trigonal tensor $\BC$ as
\begin{equation} 
\label{4.3}
(C_{11},C_{12},C_{13},C_{33},C_{44},C_{14})=\left(4,3,-\frac{16k^2}{9},0,k^2,\frac{2k}{3}\right),
 \quad C_{66}=\frac12.
\end{equation}
Then $\BC$ is rank-one convex but not polyconvex. Its acoustic tensor determinant has exactly ten distinct real projective zeros. Consequently, $\BC$ is a one-parameter family of extreme rays in the convex cone of $3\times 3$ quasiconex quadratic forms. 

\end{theorem}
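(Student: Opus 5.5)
We plan to work directly with the biquadratic (\ref{4.2}) specialised to the parameters (\ref{4.3}), namely
$$
\BB=\begin{bmatrix}4&3&-\tfrac{16k^2}{9}\\ 3&4&-\tfrac{16k^2}{9}\\ -\tfrac{16k^2}{9}&-\tfrac{16k^2}{9}&0\end{bmatrix},\qquad C_{44}=k^2,\quad C_{66}=\tfrac12,\quad C_{14}=\tfrac{2k}{3}.
$$
Here $\BB$ is indefinite, since $C_{33}=0$ and $C_{13}\neq0$. This is consistent with Theorem~\ref{Thm:4.2}. We also have $C_{44}C_{66}=k^2/2>4k^2/9=C_{14}^2$, so the necessary condition from $\BA(e_1)$ holds.

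The first step is to write out the acoustic tensor $\BA(y)$ explicitly, as a $3\times3$ matrix of quadratic forms in $y$. We then compute $D(y)=\det\BA(y)$, a ternary sextic. It is convenient to rescale $y_3\mapsto y_3/k$, which should make $k$ disappear from $D$ up to a positive factor; this is presumably why the family is one-parameter and "scale-like", and it would let all subsequent verifications be done once.

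Rank-one convexity is obtained from Proposition~\ref{Pro:2.3} applied to $\BM(y)=\BA(y)$. For this we must show three things:
\begin{enumerate}
\item[(a)] $D\ge0$ on $\mathbb P^2$;
\item[(b)] the zero set of $D$ on $\mathbb P^2$ is finite;
\item[(c)] $\BA(y_0)\succ0$ at a single convenient point.
\end{enumerate}
For (c) we can take $y_0=e_1$: there $\BA(e_1)=4\oplus\begin{bmatrix}1/2&2k/3\\ 2k/3&k^2\end{bmatrix}$, which is positive definite by the inequality above.

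For (a) and (b) we intend to exhibit $D$ explicitly as a sum of squares of cubics, or as a product of nonnegative factors, in the rescaled variables. We exploit the threefold symmetry in the $(y_1,y_2)$-plane by writing $D$ in terms of $\rho=y_1^2+y_2^2$, $y_3^2$ and the cubic invariant $y_1^3-3y_1y_2^2$. The zeros are then found by solving the resulting system. We expect exactly ten projective zeros: a $C_3$-orbit structure such as $1+3+3+3$, with $e_3$ plausibly among them, since $\BA(e_3)=\mathrm{diag}(k^2,k^2,0)$ is singular.

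Non-polyconvexity and extremality both follow from the ten zeros. At each zero $y_j$ of $D$, the kernel of $\BA(y_j)$ gives $x_j$ with $f(x_j\otimes y_j)=0$, so we obtain ten soft pairs $Z$. For extremality we apply Proposition~\ref{Pro:2.1}: we show that the linear system $\nabla f_{\CC'}(x_j\otimes y_j)=0$, $j=1,\dots,10$, in the 21 unknown moduli of $\CC'$ has a one-dimensional solution space spanned by $\CC$. This is a finite rank computation, though $k$-dependent; the rescaling again reduces it to a single matrix rank check. Alternatively one can invoke the extremal-determinant criterion of [\ref{bib:Har.Hov.}, Theorem~2.1], with finiteness of the zero set feeding into it.

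For non-polyconvexity, suppose $f(x\otimes y)=\sum_m \ell_m(x\otimes y)^2$ with bilinear $\ell_m$. Then every $\ell_m$ vanishes at the ten pairs $(x_j,y_j)$. We show that these ten conditions on the 9-dimensional space of bilinear forms have only the trivial solution (rank $9$), which forces $f\equiv0$, a contradiction. This is consistent with Theorem~\ref{Thm:2.4} and with Quarez's theorem, both of which require infinitely many zeros for a sum of squares.

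The main obstacle is step (a)–(b): certifying that the sextic $D$ is globally nonnegative and pinning down exactly ten real zeros. Our first attempt will be to find an explicit SOS or factorised form of $D$ using the $C_3$-invariants. If that fails, we will restrict $D$ to lines through $e_3$ and analyse the resulting binary sextics via discriminants.
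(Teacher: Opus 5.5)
There is a genuine gap at the step you yourself flag, (a)--(b), and your first-choice strategy for it cannot work. Suppose $D$ were a sum of squares of cubics. A product of nonnegative lower-degree factors would also be one, since nonnegative ternary quadrics and quartics are sums of squares. Then every real zero of $D$ would be a common zero of those cubics. B\'ezout, after the usual reduction when the cubics share a factor, caps a finite common real zero set at nine. So a nonnegative ternary sextic with exactly ten real projective zeros is never a sum of squares. $D$ is of Robinson type, and the ten zeros you want to establish are exactly what rules out the certificate you plan to look for.

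Your invariants are also wrong for this orientation of the tensor. $D$ is invariant under $y_1\mapsto -y_1$ and under $(y_2,y_3)\mapsto(-y_2,-y_3)$, but not under $y_3\mapsto -y_3$. It is a polynomial in $\rho^2=y_1^2+y_2^2$, $q=y_2(3y_1^2-y_2^2)$ and $z=ky_3$, with odd terms $q\rho^2z$ and $qz^3$. So it cannot be expressed through $y_1^2+y_2^2$, $y_3^2$ and $y_1^3-3y_1y_2^2$.

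Your fallback uses the right slicing, since lines through $e_3$ are the lines $\phi=\mathrm{const}$. But taking discriminants of the resulting quartics in $r$ as the line varies is the hard way round. The missing idea, which is the paper's, is to swap roles: $D$ is \emph{quadratic} in $q$. With $r=z/\rho$ and $s=q/\rho^3=\sin 3\phi\in[-1,1]$ one has $D=\frac{2k^2\rho^6}{243}F(r,s)$, and $F$ is convex in $s$. Nonnegativity then reduces to three facts:
\begin{itemize}
\item the endpoint factorizations $F(r,\pm1)=8(r\pm3)^2\bigl(6(r\mp\frac23)^2+\frac13\bigr)$;
\item the vertex identity $F=189(s-s_0(r))^2+\frac{(63-16r^2)(28r^2-9)^2}{189}$;
\item $|s_0(r)|>1$ whenever $16r^2>63$.
\end{itemize}
The constraint $|s|\le 1$, i.e.\ $\rho^6-q^2=y_1^2(y_1^2-3y_2^2)^2\ge0$, is genuinely used, because the vertex value is negative once $16r^2>63$. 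The same computation lists the ten zeros: $e_3$, three from the double roots $(r,s)=(\pm3,\mp1)$, and six from $r^2=9/28$ at the vertex.

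The extremality step, as you set it up, proves less than the statement. Proposition~\ref{Pro:2.1} with the $21$ symmetric-strain moduli gives extremality only inside the symmetric-strain cone. It does not exclude $f=F_1+F_2$ with nonnegative biquadratics $F_i$ that are not invariant under $x\leftrightarrow y$, whereas the theorem (as the paper proves it) concerns the full cone. You would need Proposition~\ref{Pro:2.2} in the $36$-dimensional space $\mathcal B_{3,3}$. Such a linear certificate is sufficient but not necessary for extremality, so whether it closes is a computation you have not done.

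In your alternative via [\ref{bib:Har.Hov.}, Theorem~2.1], finiteness of the zero set is not the input. That criterion needs $D$ to span an extreme ray of the cone of nonnegative ternary sextics and not be a square. This is what the \emph{exact} count of ten zeros supplies through Kunert--Scheiderer.

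Your non-polyconvexity argument is sound and more explicit than the paper's ``extremal and not convex''. The soft pairs are $(e_3,e_3)$, $(E_i,E_i)$, and the swapped pairs $(V_i,V_{i+3})$, $(V_{i+3},V_i)$. The ten conditions $x_j^TLy_j=0$ do have rank $9$. Alternatively, Cauchy--Binet shows that polyconvexity would make $D$ a sum of squares of cubics, which the ten-zero count forbids.

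Your rescaling remark is correct and stronger than you state. One has $f_k(x\otimes y)=f_1(S_kx\otimes S_ky)$ with $S_k=\mathrm{diag}(1,1,k)$. So the whole family is a single form up to a linear change of variables, and every check can be done at $k=1$.
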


\begin{proof}

\noindent\textbf{The biquadratic form.}
Direct calculation gives
\begin{align*}
 f_k(x\otimes y)={}&4u_1^2+4u_2^2+6u_1u_2
 -\frac{32k^2}{9}(u_1u_3+u_2u_3)\\
 &+k^2(v_1^2+v_2^2)+\frac12v_3^2
 +\frac{4k}{3}\bigl[s_1(u_1-u_2)+v_2v_3\bigr].
\end{align*}
The $y-$acoustic tensor is
\begin{equation}
\label{4.4}
\small \BA_k(y)=
\begin{bmatrix}
4y_1^2+\frac12y_2^2+k^2y_3^2+\frac{4k}{3}y_2y_3
&\frac72y_1y_2+\frac{4k}{3}y_1y_3
&\frac{4k}{3}y_1y_2-\frac{7k^2}{9}y_1y_3
\\[4mm]
\frac72y_1y_2+\frac{4k}{3}y_1y_3
&\frac12y_1^2+4y_2^2+k^2y_3^2-\frac{4k}{3}y_2y_3
&\frac{2k}{3}(y_1^2-y_2^2)-\frac{7k^2}{9}y_2y_3
\\[4mm]
\frac{4k}{3}y_1y_2-\frac{7k^2}{9}y_1y_3
&\frac{2k}{3}(y_1^2-y_2^2)-\frac{7k^2}{9}y_2y_3
&k^2(y_1^2+y_2^2)
\end{bmatrix}.
\end{equation}
\noindent\textbf{Determinant and positivity.}
Set
$$
 \rho^2=y_1^2+y_2^2,
 \qquad q=y_2(3y_1^2-y_2^2),
 \qquad z=ky_3.
$$
A direct calculation gives
\begin{equation}
\label{4.5}
\det \BA_k(y)=\frac{2k^2}{243}
\left(
27\rho^6+189q^2+72\rho^4z^2-432q\rho^2z
+48\rho^2z^4+224qz^3
\right).
\end{equation}
If $\rho=0$, this determinant is zero.  For $\rho>0$, write
$$
 r=\frac{z}{\rho},
 \qquad s=\frac{q}{\rho^3}.
$$
For $y_1=\rho\cos\phi$ and $y_2=\rho\sin\phi$ we have $s=3\sin\phi-4\sin^3\phi=\sin3\phi$, thus $|s|\leq1$. 
Equation (\ref{4.5}) simplifies to
$$
 \det A_k(x)=\frac{2k^2\rho^6}{243}F(r,s),
$$
where
$$
 F(r,s)=189s^2+(224r^3-432r)s+48r^4+72r^2+27.
$$
For fixed $r$, this is a convex quadratic function in $s\in [-1,1]$. Its endpoint values are
\begin{align*}
 F(r,1)&=8(r+3)^2\left(6\left(r-\frac23\right)^2+\frac13\right)\ge0,\\
 F(r,-1)&=8(r-3)^2\left(6\left(r+\frac23\right)^2+\frac13\right)\ge0.
\end{align*}
Its global minimizer is the vertex
$$
 s_0(r)=\frac{8r(27-14r^2)}{189},
$$
and completing the square gives
$$
 F(r,s)=189\bigl(s-s_0(r)\bigr)^2
 +\frac{(63-16r^2)(28r^2-9)^2}{189}.
$$
If $r^2\leq 63/16$ then $F(r,s)\geq 0$. If otherwise $r^2>63/16$, then we have
$$
 |s_0(r)|=\frac{8|r|(14r^2-27)}{189}
 \geq \frac{25\sqrt7}{28}>1,
$$
hence $F(r,s)\geq \min (F(r,1),F(r,1))\geq 0.$ Therefore we conclude that 
\begin{equation}
\label{4.6}
\det \BA_k(y)\ge0\quad\text{for all }\quad y\in\R^3.
\end{equation}
Note, next that from (\ref{4.4}), we have the $y-$acoustic tensor at $e_1$ is
\begin{equation}
\label{4.7}
\small \BA_k(e_1)=
\begin{bmatrix}
4 & 0  & 0 \\[1.5mm]
0 & \frac12 & \frac{2k}{3} \\[1.5mm]
0 &\frac{2k}{3} & k^2
\end{bmatrix},
\end{equation}
thus $\BA_k(e_1)\succ0$ with three strictly positive eigenvalues, therefore by (\ref{4.6}) and Proposition~\ref{Pro:2.3}, we get 
$\BA_k(e_1)\succeq0$ for all $y\in\R^3,$ i.e., $f_k$ is rank-one convex.  

\noindent \textbf{The ten real projective zeros.}
If $\rho=0$, the unique projective zero is the pole $P=e_3.$ For $\rho>0$, the endpoint factorizations show that endpoint zeros occur
only at $(r,s)=(3,-1)$ or equivalently, $(-3,1)$. They are (can be checked by direct calculation):
$$
 E_1=(0,2k,6),\qquad E_2=(-\sqrt3k,-k,6),\qquad E_3=(\sqrt3k,-k,6).
$$
Next, an interior zero must satisfy $s=s_0(r)$ (because the acoustic determinant is globally non-negative). The completed-square identity then gives
$$
 r^2=\frac9{28}\quad\text{or}\quad r^2=\frac{63}{16}.
$$
The second value is inadmissible because it would yield $|s_0(r)|>1$.  Thus, up to
$(r,s)\sim(-r,-s)$,
$$
 r=\frac{3}{2\sqrt7},
 \qquad s=\frac{10}{7\sqrt7}.
$$
The resulting six zeros are
$$
\begin{aligned}
 V_1&=(2\sqrt3k,4k,3),
&V_2&=(-3\sqrt3k,k.3),
&V_3&=(\sqrt3k,-5k,3),\\
 V_4&=(-2\sqrt3k,4k,3),
&V_5&=(-\sqrt3k,-5k,3),
&V_6&=(3\sqrt3k,k,3).
\end{aligned}
$$
Consequently
\begin{equation}
\label{4.7}
 Z_{\R}(\det \BA_k) =\{P,E_1,E_2,E_3,V_1,V_2,V_3,V_4,V_5,V_6\},
\end{equation}
so $\det \BA_k(y)$ has exactly $10$ distinct real projective zeros. By the work of Kunert and Scheiderer [\ref{bib:Kun.Sch.}], such  ternary sextics are extremals in the cone of nonnegative ternary sextics, thus so is $\det \BA_k(y).$ Note, that $\det \BA_k(y)$
is not a perfect square, because otherwise it would have infinitely many real projective zeros. Finally, by a theorem of Hovsepyan and Harutyunyan [\ref{bib:Har.Hov.}, Therem~2.1] we obtain that $f_k$ is an extreme ray in the full cone of $3\times 3$ quasiconvex quadratic form (not only the trigonal cone). Apparently $f_k$ is not polyconvex, because it is an extremal and is not convex. This completes the proof of the theorem.  

\end{proof}

\section*{Acknowledgements}
The work of D.H. is supported by the National Science Foundation under Grants No. DMS-2206239.


\begin{thebibliography}{99}

\bibitem{2} E. Acerbi and N. Fusco: Semicontinuity problems in the calculus of variations, \textit{Arch. Ration. Mech. Anal.}, 
86 (1984), 125--145.
\label{bib:Ace.Fus.}

\bibitem{2} G. Allaire and R.V. Kohn. Optimal lower bounds on the elastic energy of a composite made from two non-well-ordered isotropic materials, \textit{Quarterly of applied mathematics, } vol. LII, 311-333 (1994)
\label{bib:All.Koh.}

\bibitem{3}  J. M. Ball. Convexity conditions and existence theorems in nonlinear elasticity, \textit{Arch. Ration. Mech. Anal,} 63, 337-403 (1976).
\label{bib:Ball}

\bibitem{3}  O. Boussaid, C. Kreisbeck, and A. Schl\"omerkemper. Characterizations of Symmetric Polyconvexity, \textit{Arch. Ration. Mech. Anal,} Volume 234, pages 417--451 (2019).
\label{bib:Bou.Kre.Sch.}


\bibitem{5} G. Blekherman, J. Hauenstein, J. C. Ottem, K. Ranestad, B. Sturmfels, Algebraic boundaries of Hilbert's SOS cones, \textit{Compos. Math.,}148 (2012), 1717--1735. 
\label{bib:BLe.Hau.Ott.Ran.Stu.}


\bibitem{5}  A. V. Cherkaev and L. V. Gibiansky. The exact coupled bounds for effective tensors of electrical and magnetic properties of two-component two-dimensional composites, \textit{Proceedings of the Royal Society of Edinburgh. Section A, Mathematical and Physical Sciences,} 122 (1992), pp. 93--125.
\label{bib:Che.Gib.1}

\bibitem{5} A. V. Cherkaev and L. V. Gibiansky. Coupled estimates for the bulk and shear moduli of a two-dimensional isotropic elastic composite, \textit{Journal of the Mechanics and Physics of Solids,} 41 (1993), pp. 937--980.
\label{bib:Che.Gib.2}

\bibitem{milton-cherkaev-1995} G.~W.~Milton, A.~V.~Cherkaev, Which elasticity tensors are realizable?, \textit{J. Eng. Mater. Technol.,} 117 (1995), 483--493. 
\label{bib:Mil.Che.}

\bibitem{6} M. D. Choi, Positive semidefinite biquadratic forms, \textit{Linear Algebra Appl.}\ 12 (1975), 95--100. 
\label{bib:Choi}

\bibitem{6} M. D. Choi, T. Y. Lam, Extremal positive semidefinite forms, Math.\ Ann.\ 231 (1977), 1--18. 
\label{bib:Cho.Lam.}

\bibitem{6} M. D. Choi, T. Y. Lam, B. Reznick, Real zeros of positive semidefinite forms. I, Math.\ Z.\ 171 (1980), 1--26. \label{bib:Cho.Lam.Rez.}

\bibitem{6} B. Dacorogna. \textit{Direct methods in the calculus of variations.} Springer Applied Mathematical Sciences, Vol. 78, 2nd Edition (2008).
\label{bib:Dacorogna}

\bibitem{10} D. Harutyunyan, A note on the extreme points of the cone of quasiconvex quadratic forms with orthotropic symmetry, J.\ Elasticity 140 (2020), 79--93. 
\label{bib:Harutyunyan}

\bibitem{10} D. Harutyunyan, N. Hovsepyan, On the extreme rays of the cone of $3\times3$ quasiconvex quadratic forms: extremal determinants versus extremal and polyconvex forms, Arch.\ Ration.\ Mech.\ Anal.\ 244 (2022), 1--25. 
\label{bib:Har.Hov.}

\bibitem{10} D. Harutyunyan, G. W. Milton, Explicit examples of extremal quasiconvex quadratic forms that are not polyconvex, Calc.\ Var.\ Partial Differential Equations 54 (2015), 1575--1589. 
\label{bib:Har.Mil.1}

\bibitem{10} D. Harutyunyan, G. W. Milton, On the relation between extremal elasticity tensors with orthotropic symmetry and extremal polynomials, Arch.\ Ration.\ Mech.\ Anal.\ 223 (2017), 199--212. 
\label{bib:Har.Mil.2}

\bibitem{10} D. Harutyunyan, G. W. Milton, Towards characterization of all $3\times3$ extremal quasiconvex quadratic forms, Comm.\ Pure Appl.\ Math.\ 70 (2017), no.~11, 2164--2190. 
\label{bib:Har.Mil.3}


 \bibitem{12} J. Hou, Ch.-K. Li, Y.-T. Poon, X. Qi, and  N.-S. Sze. A new criterion and a special class of $k-$positive maps, \textit{Linear Algebra and its Applications} 470 (2015) 51-69.
 \label{bib:Hou.Li.Poo.Qi.Sze.}
 
 
 \bibitem{7}  H. Kang, E. Kim, and G. W. Milton.
Sharp bounds on the volume fractions of two materials in a two-dimensional body from electrical boundary measurements: the translation method,
\textit{Calculus of Variations and Partial Differential Equations,} 45, 367-401 (2012).
\label{bib:Kan.Kim.Mil.}

\bibitem{8} H. Kang and G. W. Milton. Bounds on the volume fractions of two materials in a
three dimensional body from boundary measurements by the translation method, \textit{SIAM Journal on Applied Mathematics,} 
73, 475--492 (2013).
 \label{bib:Kan.Mil.}

\bibitem{9} H. Kang, G. W. Milton, and J.-N. Wang.  Bounds on the Volume Fraction of the Two-Phase Shallow Shell Using One Measuremen, \textit{Journal of Elasticity,} 114, 41-53 (2014).
\label{bib:Kan.Mil.Wan.}


\bibitem{11} R. V. Kohn and R. Lipton. Optimal bounds for the effective energy of a mixture
of isotropic, incompressible, elastic materials.
\textit{Archive for Rational Mechanics and Analysis,} 102, 331--350 (1988).
\label{bib:Koh.Lip.}


\bibitem{12} A. Kunert, C. Scheiderer, Extreme positive ternary sextics, Trans.\ Amer.\ Math.\ Soc.\ 370 (2018), no.~6, 3997--4013; \textit{arXiv}:1508.03816. 
\label{bib:Kun.Sch.}

\bibitem{} X. Li and W. Wu. A class of generalized positive linear maps on matrix algebras, \textit{Linear algebra and its applications} 439 (2013) 2844-2860. 
\label{bib:Li.Wu.}

\bibitem{9} K. A. Lurie and A. V. Cherkaev. Accurate estimates of the conductivity of mixtures formed of two materials in a given proportion (two-dimensional problem), \textit{Doklady Akademii Nauk SSSR,} 264 (1982), 1128--1130, English translation in \textit{Soviet Phys. Dokl.,} 27 (1982), 461--462.
\label{bib:Lur.Che.1}

\bibitem{10} K. A. Lurie and A. V. Cherkaev. Exact estimates of conductivity of composites formed by two isotropically conducting media taken in prescribed proportion \textit{Proceedings of the Royal Society of Edinburgh, Section A, Mathematical and Physical Sciences} 99 (1984), 71--87.
\label{bib:Lur.Che.2}

\bibitem{10} G. W. Milton, A. V. Cherkaev, Which elasticity tensors are realizable?, J.\ Eng.\ Mater.\ Technol.\ 117 (1995), 483--493.
 \label{bib:Mil.Che.}
 
 
  \bibitem{12} G. W. Milton. On characterizing the set of positive effective tensors of composites: The variational method and the translation method, \textit{Communications on Pure and Applied Mathematics,} Vol. XLIII, 63--125 (1990).
\label{bib:Milton.1}

 \bibitem{13} G. W. Milton. \textit{The Theory of Composites,} vol. 6 of Cambridge Monographs on Applied and Computational Mathematics, Cambridge University Press, Cambridge, United Kingdom, 2002
\label{bib:Milton.2}

 \bibitem{14} G. W. Milton. Sharp inequalities which generalize the divergence theorem: an extension of the notion of quasi-convexity, \textit{Proceedings Royal Society A} 469, 20130075 (2013).
 \label{bib:Milton.3}

\bibitem{15} G. W. Milton and L. H. Nguyen. Bounds on the volume fraction of 2-phase, 2-dimensional
elastic bodies and on (stress, strain) pairs in composites.
\textit{Comptes Rendus M\'ecanique,} 340, 193-204 (2012).
\label{bib:Mil.Ngu.}


 \bibitem{16} C. B. Morrey. Quasiconvexity and the lower semicontinuity of multiple integrals,
 \label{bib:Morrey.1}               \textit{Pacific Journal of Mathematics} 2, 25--53 (1952).

\bibitem{17} C. B. Morrey. \textit{Multiple integrals in the calculus of variations},
  \label{bib:Morrey.2}             Springer--Verlag, Berlin, 1966.
  
  
\bibitem{17} S. M\"uller. Variational models for microstructure and phase transitions. \textit{In Calculus of variations and geometric
evolution problems (Cetraro, 1996),} volume 1713 of \textit{Lecture Notes in Math.,} pp. 85--210. Springer, Berlin, 1999.
\label{bib:Mueller}


 \bibitem{18} F. Murat and L. Tartar. Calcul des variations et homog\'en\'isation. (French) [Calculus of variation and homogenization], in
            Les m\'ethodes de l'homog\'en\'eisation: th\'eorie et applications en physique, volume 57 of Collection de la Direction des
\'etudes et recherches d'Electricit\'e de France, pages 319-369, Paris, 1985, Eyrolles, English
translation in Topics in the Mathematical Modelling of Composite Materials, pages 139-173, ed. by A. Cherkaev and R. Kohn, ISBN 0-8176-3662-5.
\label{bib:Mur.Tar.}

 \bibitem{18} V. Nesi and E. Rogora. A complete characterization of invariant jointly rank-$r$ convex quadratic forms and applications to composite materials,  \textit{ESAIM: Control, Optimization and the Calculus of Variations,}  (2007), Volume: 13, Issue: 1, page 1--34.
  \label{bib:Nes.Rog.}
   

\bibitem{20} R. Quarez, On the real zeros of positive semidefinite biquadratic forms, \textit{Comm. Algebra} 43 (2015), no.~3, 1317--1353. 
\label{bib:Quarez}

\bibitem{20} R.T.Rockafellar, Convex Analysis, \textit{Princeton University Press}, Princeton, NJ, 1970. 
\label{bib:Rockafellar}

\bibitem{20} B. Reznick, On Hilbert's construction of positive polynomials, \textit{arXiv}:0707.2156. 
\label{bib:Reznick}


\bibitem{20} M. Sion, On general minimax theorems, Pacific J.\ Math.\ 8 (1958), 171--176.
 \label{bib:Sion}
 
 
 \bibitem{19}  V. \v{S}ver\'ak. Rank-one convexity does not imply quasiconvexity. \textit{Proc. Roy. Soc. Edinburgh Sect. A,} 120(1-2):185--189, 1992.
\label{bib:Sverak}


\bibitem{18} L. Tartar, Estimation de coefficients homog{\'e}n{\'e}is{\'e}s ({French}) [{Estimation} of homogenization
                 coefficients], in Computing Methods in Applied Sciences and Engineering:
                 Third International Symposium, Versailles, France,
                 December 5--9, 1977, pages 364--373, Springer-Verlag, Berlin, 1979, English
                translation in Topics in the Mathematical Modelling of Composite Materials, pp. 9-20,
           ed. by A. Cherkaev and R. Kohn, ISBN 0-8176-3662-5.
\label{bib:Tartar.1}

\bibitem{19} L. Tartar, Estimations fines des coefficients homog\'en \'eis \'es.
          (French) [Fine estimations of homogenized coefficients], in Ennio de Giorgi Colloquium: Papers Presented at a
Colloquium held at the H. Poincar\'e Institute in November 1983, edited by P. Kr\'ee,
volume 125 of Pitman Research Notes in Mathematics, pages 168-187, London, 1985, Pitman Publishing Ltd.
  \label{bib:Tartar.2}   
  
  
  \bibitem{20} F. J. Terpstra, Die Darstellung biquadratischer Formen als Summen von Quadraten mit Anwendung auf die Variationsrechnung, Math.\ Ann.\ 116 (1938), 166--180. 
\label{bib:Terpstra}

\bibitem{20} L. Van Hove, Sur l'extension de la condition de Legendre du calcul des variations aux int\'egrales multiples \`a plusieurs fonctions inconnues, Nederl.\ Akad.\ Wetensch.\ Proc.\ 50 (1947), 18--23. 
\label{bib:VanHove.1}

\bibitem{27} L. Van Hove. Sur le signe de la variation seconde des int\'egrales multiples
\'a plusieurs functions inconnues, \textit{Acad. Roy. Belgique Cl. Sci. M\'em. Coll.} \textbf{24} (1949), 68.
  \label{bib:VanHove.2}    
  
\bibitem{28} K. Zhang, The structure of rank-one convex quadratic forms on linear elastic strains, Proc.\ Roy.\ Soc.\ Edinburgh Sect.\ A 133 (2003), 213--224. 
\label{bib:Zhang}



\end{thebibliography}
\end{document}